\definecolor{purpleHTML}{HTML}{8000FF}
\tikzset{
  red_vertex/.style={circle, thick, draw=red, fill=white},
  blue_vertex/.style={circle, thick, draw=NavyBlue},
  green_vertex/.style={circle, thick, draw=ForestGreen},
  purple_vertex/.style={circle, thick, draw=purpleHTML},
  vertex/.style={circle, thick, draw=black},
  edge/.style={thick},
  red_edge/.style={thick, draw=red},
  blue_edge/.style={thick, draw=NavyBlue},
  green_edge/.style={thick, draw=ForestGreen},
  purple_edge/.style={thick, draw=purpleHTML},
  purple_dotted_edge/.style={thick, dash pattern=on 3pt off 2pt, draw=purpleHTML}
}
\newtheorem{theorem}{Theorem}[section]
\newtheorem{lemma}[theorem]{Lemma}
\newtheorem{prop}[theorem]{Proposition}
\newtheorem{cor}[theorem]{Corollary}
\theoremstyle{definition}
\newtheorem{defin}[theorem]{Definition}
\newcommand{\Int}{\operatorname{int}}
\newcommand{\Ext}{\operatorname{ext}}
\title{Counting Subgraphs of Coloring Graphs using Shadow Graphs}
\author{Simon MacLean}
\address{Department of Mathematics \& Computer Science \\ Santa Clara University \\ CA 95050 \\ USA}
\email{smaclean@scu.edu}
\subjclass[2020]{Primary: 05C15, 05C31; Secondary: 05C05, 05C07, 05C60}
\keywords{coloring graphs, chromatic polynomials, graph invariants, generalized degree sequences}
\begin{document}

\begin{abstract}
Given a graph $G$, the $k$-coloring graph $\mathcal{C}_k(G)$ is constructed by selecting proper $k$-colorings of $G$ as vertices, with an edge between two colorings if they differ in the color of exactly one vertex. The number of vertices in $\mathcal{C}_k(G)$ is the famous chromatic polynomial of $G$. Asgarli, Krehbiel, Levinson and Russell showed that for any subgraph $H$, the number of induced copies of $H$ in $\mathcal{C}_k(G)$ is a polynomial function in $k$. Hogan, Scott, Tamitegama, and Tan found a shorter proof for polynomiality of these chromatic $H$-polynomials. In this paper, we provide a method of constructing these polynomials explicitly in terms of chromatic polynomials of \emph{shadow graphs}. We illustrate the practicality of our formulas by computing an explicit formula for $H$-polynomial for trees when $H=Q_d$ is an arbitrary hypercube, a task which does not seem approachable from previous methods. The coefficients of the resulting polynomials feature \emph{generalized degree sequences} introduced by Crew. In the special case when $H=P_2$, the corresponding polynomial is dubbed the \emph{chromatic pairs polynomial}. We present a pair of graphs $G_1$ and $G_2$ sharing the same chromatic pairs polynomial but different chromatic polynomials, disproving a conjecture raised by Asgarli, Krehbiel, Levinson and Russell. 
\end{abstract}

\maketitle

\section{Introduction}

The chromatic polynomial $\pi_G(k)$ of a simple graph $G$ counts the number of ways to properly color $G$ using $k$ colors. This polynomial invariant was introduced by Birkhoff~\cite{Bir12} in an attempt to prove the Four Color Theorem. This concept is foundational in graph theory and combinatorics. Instead of simply counting the number of colorings, we can also keep track of how two different colorings are related. One such framework, known as a reconfiguration of $k$-colorings, declares a precise adjacency between two colorings. More precisely, the \emph{coloring graph} $\mathcal{C}_k(G)$ represents all proper colorings of $G$, with edges between colorings that differ at exactly one vertex. Viewed through this lens, the chromatic polynomial of $G$ is the number of vertices in $C_k(G)$ as $k$ varies.

Working with $\mathcal{C}_k(G)$ allows us to generalize the chromatic polynomial by counting other subgraphs of $\mathcal{C}_k(G)$. Asgarli, Krehbiel, Levinson, and Russell \cite{AKLR25} introduce the notation $\pi_G^{(H)}(k)$ to denote the function that counts occurrences of a graph $H$ as an induced subgraph of $\mathcal{C}_k(G)$. There are now two different proofs that $\pi_G^{(H)}(k)$ is indeed a polynomial function of $k$: the first proof is in the original paper \cite{AKLR25} and the second proof appeared in a subsequent work by Hogan, Scott, Tamitegama, and Tan \cite{HSTT24}. In this paper, we offer a new proof of the polynomiality which has the advantage of being constructive in nature. We present a method using \emph{shadow graphs} to calculate these polynomials and derive explicit formulas. 

\begin{theorem}
Given a fixed connected graph $H$, there exists an algorithm (whose complexity depends on $H$) which applied to any base graph $G$ produces a sequence of graphs $G_1, G_2, \ldots, G_r$, derived from $G$ such that $\pi_G^{(H)}(k)$ is a rational linear combination of $\pi_{G_1}(k), \pi_{G_2}(k), \ldots, \pi_{G_r}(k)$.     
\end{theorem}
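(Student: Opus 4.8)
The plan is to reduce the problem to counting \emph{graph homomorphisms} into $\mathcal{C}_k(G)$, and then to recognise each homomorphism count as a chromatic polynomial of an explicit graph built from $G$ --- the ``shadow graph''.

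First I would pass from induced copies to homomorphism counts. Write $n=|V(H)|$ and $X=\mathcal{C}_k(G)$. The number of injective maps $V(H)\to V(X)$ that are isomorphisms onto the induced subgraph they span equals, by Möbius inversion over the Boolean lattice of graphs on the fixed vertex set $V(H)$,
\[
\sum_{\substack{H'\supseteq H\\ V(H')=V(H)}}(-1)^{|E(H')|-|E(H)|}\,\bigl|\{\text{injective homomorphisms }H'\to X\}\bigr|,
\]
and each injective-homomorphism count is, by Möbius inversion over the partition lattice $\Pi_{V(H')}$, an integer combination of unrestricted homomorphism counts $\bigl|\operatorname{Hom}(H'/\sigma,X)\bigr|$. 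Dividing by $|\mathrm{Aut}(H)|$ (for unlabeled copies) then exhibits $\pi_G^{(H)}(k)$ as a $\mathbb{Q}$-linear combination of quantities $\bigl|\operatorname{Hom}(H'',\mathcal{C}_k(G))\bigr|$, where $H''$ ranges over a finite list --- contractions of edge-supergraphs of $H$ --- depending only on $H$.

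Next I would handle a single such $H''$, with vertices $h_1,\dots,h_m$. A homomorphism $\phi\colon H''\to\mathcal{C}_k(G)$ is an assignment of a proper $k$-coloring $\phi_i$ of $G$ to each $h_i$ so that for every edge $h_ih_j$ the colorings $\phi_i,\phi_j$ disagree at \emph{exactly one} vertex of $G$. Summing over the choice of that vertex: for each function $\rho\colon E(H'')\to V(G)$ let $\operatorname{Hom}_\rho$ be the set of $\phi$ with $\phi_i,\phi_j$ agreeing off $\rho(h_ih_j)$ and disagreeing at $\rho(h_ih_j)$ for every edge, so that $\bigl|\operatorname{Hom}(H'',\mathcal{C}_k(G))\bigr|=\sum_\rho\bigl|\operatorname{Hom}_\rho\bigr|$. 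I then claim $\bigl|\operatorname{Hom}_\rho\bigr|=\pi_{\widehat G_\rho}(k)$, where the shadow graph $\widehat G_\rho$ is the quotient of the Cartesian product $G\,\square\,H''$ obtained by, for each vertex $u$ of $G$, identifying $(u,h_i)$ with $(u,h_j)$ whenever $h_i$ and $h_j$ lie in the same component of $H''$ after deleting the edges in $\rho^{-1}(u)$, and retaining among the $H''$-fibre edges only those $\{(v,h_i),(v,h_j)\}$ with $\rho(h_ih_j)=v$. Under this identification a proper $k$-coloring of $\widehat G_\rho$ is exactly a tuple $(\phi_i)$ of the required kind: the $G$-fibre edges force each $\phi_i$ to be proper, the identifications force $\phi_i$ and $\phi_j$ to agree off $\rho(h_ih_j)$, and the retained fibre edges force the single disagreement; in the degenerate case where a retained edge becomes a loop --- precisely when $\rho$ is globally inconsistent, e.g.\ assigns distinct vertices around a cycle of $H''$ --- both sides are $0$. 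Each $\widehat G_\rho$ is a simple graph explicitly derived from $G$.

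Finally, concatenating the graphs $\widehat G_\rho$ over all $\rho$ and all $H''$ produces the sequence $G_1,\dots,G_r$; the coefficients are products of the integer Möbius values from the first step with the factor $1/|\mathrm{Aut}(H)|$, hence rational. The algorithm carries out the purely combinatorial expansion of the first step at a cost depending only on $H$, and then, given $G$, enumerates the at most $|V(G)|^{|E(H'')|}$ functions $\rho$ and constructs each $\widehat G_\rho$ in polynomial time. The hard part will be the middle step: proving carefully that proper colorings of $\widehat G_\rho$ biject with the constrained tuples $(\phi_i)$ --- that the identification pattern is exactly right, so that ``agree except at $\rho(h_ih_j)$'' is imposed simultaneously for all edges and nothing further is imposed --- and disposing of the loop/degenerate cases so that they vanish on both sides; a lesser nuisance is the labeled-versus-unlabeled bookkeeping in the first step, which is where non-integrality of the coefficients can enter. (The hypothesis that $H$ is connected plays no role in this argument, though it is natural for the explicit formulas pursued later.)
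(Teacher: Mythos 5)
Your proposal is correct, but it takes a genuinely different route from the paper. The paper performs no inclusion--exclusion over supergraphs or partitions: it fixes an ordered traversal of a spanning tree of the connected graph $H$, classifies the possible recoloring behaviours as \emph{valid state maps} $f\colon V(H)\to\mathbb{N}^d$ (injectivity, the differ-at-exactly-one-index criterion, and two ordering conditions encode inducedness together with a canonical labeling), and for each such $f$ and each ordered tuple $\mathbf{x}\in (V(G))_d$ builds a shadow graph $G_{f,\mathbf{x}}$ by blowing each color-changing vertex up into a clique of palette size and wiring the cliques according to the states of $f$; summing $\pi_{G_{f,\mathbf{x}}}(k)$ over all $f$ and $\mathbf{x}$ and dividing by $|\operatorname{Aut}(H)|$ gives the theorem. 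You instead reduce induced-copy counts to homomorphism counts of a finite family of graphs $H''$ depending only on $H$ (two M\"obius inversions, closer in spirit to the Hogan--Scott--Tamitegama--Tan polynomiality proof cited in the paper), then stratify $\operatorname{Hom}(H'',\mathcal{C}_k(G))$ by the map $\rho$ recording the unique disagreement vertex of each edge, and identify each stratum with the proper colorings of a quotient of $G\,\square\,H''$; your justification of that bijection (agreements propagate along components of $H''-\rho^{-1}(u)$, the retained fibre edges impose exactly the disagreements, and loops occur precisely when the stratum is empty, so both sides vanish) is sound. Your approach is more modular, needs no connectivity hypothesis (the paper handles disconnected $H$ by a separate recursion on components), and replaces the delicate ordering conditions of valid state maps by standard inclusion--exclusion; the price is a much larger family of auxiliary graphs (up to $|V(G)|^{|E(H'')|}$ choices of $\rho$ per $H''$, most degenerate, versus the paper's tuples of at most $|V(H)|-1$ vertices of $G$) and shadow graphs that are less directly usable for the explicit computations (e.g.\ hypercubes in trees) that the paper's clique-blow-up construction is designed to support.
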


We also focus on the special case when $H=Q_d\cong \mathop{\square}_{i=1}^d K_2$ is the hypercube graph and $G=T$, an arbitrary tree. We discuss how $\pi_{T}^{(Q_d)}(k)$ relates to the generalized degree sequence of trees, as studied by Crew in \cite{Cre20}, and their limitations as complete invariants.

Finally, we present a counterexample to the conjecture of Asgarli, Krehbiel, Levinson and Russell. The authors conjectured \cite{AKLR25}*{Conjecture 5.2} that if two graphs $G_1$ and $G_2$ satisfy $\pi_{G_1}^{(P_2)}(k)=\pi_{G_2}^{(P_2)}(k)$, then $\pi_{G_1}(k)=\pi_{G_2}(k)$. The counterexample below was found quickly using a computer program by applying the formulas involving shadow graphs developed in this paper.  
\begin{figure}[ht]
    \centering
    \begin{tikzpicture}
        \node[vertex] (A1) at (0,0) {};
        \node[vertex] (B1) at (1,0) {};
        \node[vertex] (C1) at (1,1) {};
        \node[vertex] (D1) at (0,1) {};
        \draw[edge] (A1) -- (B1) -- (C1) -- (D1) -- (A1) -- (C1);
        \node[vertex] (A2) at (2,0) {};
        \node[vertex] (B2) at (3,0) {};
        \node[vertex] (C2) at (3,1) {};
        \node[vertex] (D2) at (2,1) {};
        \draw[edge] (A2) -- (B2) -- (C2) -- (D2) -- (A2) -- (C2);
        \node[vertex] (A) at (6,0) {};
        \node[vertex] (B) at (7,0) {};
        \node[vertex] (C) at (7,1) {};
        \node[vertex] (D) at (6,1) {};
        \node[vertex] (E) at (8,0.5) {};
        \node[vertex] (F) at (5, 0.5) {};
        \node[vertex] (G) at (8,0) {};
        \node[vertex] (H) at (8,1) {};
        \draw[edge] (A) -- (B) -- (C) -- (D) -- (A) -- (C);
        \draw[edge] (B) -- (D);
        \draw[edge] (B) -- (E);
        \draw[edge] (C) -- (E);
        \draw[edge] (G) -- (B);
        \draw[edge] (H) -- (C);

        \node at (6.5, -0.5) {Graph $ G_2 $};
        \node at (1.5, -0.5) {Graph $ G_1 $};
        
    \end{tikzpicture}
\end{figure}

We also have another counterexample where both graphs are connected.

\smallskip 

\textbf{Outline of the paper.} In Section~\ref{sec:examples}, we begin with the simplest non-trivial case where $H \cong P_2$ and progressively generalize our construction. In Section~\ref{sec:general-shadow} we generalize further to $H$-subgraphs that represent multiple recoloring sequences, ending with the fully general construction for arbitrary $H$. Section~\ref{sec:hypercube} applies this construction to the special case of counting hypercube graphs in coloring graphs of trees. In Section~\ref{sec:gen-deg-seq}, we investigate how counting hypercube graphs in coloring graphs of trees relates to the generalized degree sequence of a tree. Finally, in Section~\ref{sec:counterexample}, we present two different counterexamples to \cite{AKLR25}*{Conjecture 5.2}.

\textbf{Acknowledgments.} I gratefully acknowledge the support of Santa Clara University through a research stipend I received during Summer 2024, which made this work possible. This funding was administered via the REAL (Research Experience for Advanced Learning) program. I also wish to acknowledge the honor of receiving the Rick Scott Memorial Scholarship. I am also grateful to the anonymous referee for their valuable feedback that improved the exposition.

\section{Shadow Graphs for Basic Subgraphs}\label{sec:examples}

In this section we introduce shadow graphs, the main constructive tool of this paper. We demonstrate their use in the simplest possible case, when $H=K_2$. We extend this construction to more complex graphs including cliques and Cartesian products of cliques, the latter of which is used in Section~\ref{sec:hypercube}. The final construction counts induced hexagons, which can represent two possible sequences of recolorings. For disjoint subsets $U, W\subseteq V(G)$, we use the shorthand $U\times W$ to mean the set of all edges $uw$ for $u\in U$ and $w\in W$. For a set of vertices $V$, we use the shorthand $(V)_k$ to mean the set of all $k$-tuples $(v_1,\dots,v_k)$ of distinct vertices in $V$.

\subsection{Shadow graphs}

A shadow graph, $G'$ is a specific kind of graph constructed from $G$ to contain many induced subgraphs isomorphic to $G$, such that the induced subgraphs overlap at some set of vertices. This allows one $k$-coloring of $G'$ to induce many $k$-colorings of $G$ each only differing from each other at a restricted subset of $V(G)$. Recall that our goal is to compute $\pi_G^{(H)}(k)$, which is the number of induced subgraphs of $\mathcal{C}_k(G)$ isomorphic to $H$. By definition, a subgraph of $\mathcal{C}_k(G)$ is precisely a set of $k$-colorings of $G$ with some pairwise conditions on the number of vertices at which they differ. Since shadow graphs are constructed such that their proper $k$-colorings contain a set of $k$-colorings of $G$ with pairwise conditions on the number of vertices at which they differ, the problem of counting subgraphs of $\mathcal{C}_k(G)$ can be translated to counting colorings of an appropriate shadow graph of $G$.

\begin{defin}
    For a graph $G$, some tuple of vertices $\mathbf{x}=(v_1,\dots,v_d)\in (V(G))_d$, and corresponding integers $2\le r_1,\dots,r_d$, a \emph{shadow graph} $G_\mathbf{x}$ has vertex set $V(G)\setminus \mathbf{x} \cup \{v_{i,j}:i \le d, \; j\le r_i\}$. For any two vertices $w_1,w_2\in V(G)\setminus \mathbf{x}$, $w_1w_2\in E(G_\mathbf{x})$ if and only if $w_1w_2\in E(G)$. For a vertex $v_{i,j}\in V(G_\mathbf{x})$, and a vertex $w\in V(G)\setminus \mathbf{x}$, $v_{i,j}w\in E(G_\mathbf{x})$ if and only if $v_iw\in E(G)$. For any $v_{i,j},v_{i,k}\in V(G_\mathbf{x})$, $v_{i,j}v_{i,k}\in E(G_\mathbf{x})$, that is, the $v_{i,j}$ form a \emph{shadow clique} for any fixed $i$. For a pair of vertices $v_{i,j},v_{k,l}\in V(G_\mathbf{x})$, $v_{i,j}v_{k,l}\notin E(G_\mathbf{x})$ if $v_iv_k \notin E(G)$.
\end{defin}
Below is an example of a shadow graph $G_\mathbf{x}$ of the given graph $G$ where $\mathbf{x}=(v_1,v_2)$, $r_1=2$, and $r_2=2$. The purple dotted edges indicate edges that can either be present or absent in $G_\mathbf{x}$:

\begin{figure}[ht]
	\centering
	\begin{tikzpicture}
		\node[vertex] (0) at (-1, 0.5) {};
		\node[vertex] (1) at (-1, -0.5) {};
		\node[blue_vertex] (2) at (-2, -0) {};
		\node[] (16) at (-2.9, -0.4) {$v_1$};
		\node[] (17) at (-2.05, -0.4) {$v_2$};
		\node[red_vertex] (3) at (-3, 0) {};
		\node[vertex] (4) at (-3.7, 0.7) {};
		\node[vertex] (5) at (-4, -0) {};
		\node[vertex] (6) at (-3.7, -0.7) {};
		\node[vertex] (7) at (1, 0) {};
		\node[vertex] (8) at (1.3, 0.7) {};
		\node[vertex] (9) at (1.3, -0.7) {};
		\node[red_vertex] (10) at (2, -0.5) {};
		\node[red_vertex] (11) at (2, 0.5) {};
		\node[] (18) at (2, -0.9) {$v_{1,1}$};
		\node[] (19) at (2, 0.9) {$v_{1,2}$};
		\node[] (22) at (2.5, -1.3) {$G_\mathbf{x}$};
		\node[] (23) at (-2.5, -0.8) {$G$};
		\node[blue_vertex] (12) at (3, -0.5) {};
		\node[blue_vertex] (13) at (3, 0.5) {};
		\node[] (20) at (3, -0.9) {$v_{2,1}$};
		\node[] (21) at (3, 0.9) {$v_{2,2}$};
		\node[vertex] (14) at (4, 0.5) {};
		\node[vertex] (15) at (4, -0.5) {};
		\draw[edge] (2) -- (0);
		\draw[edge] (2) -- (1);
		\draw[purple_edge] (3) -- (2);
		\draw[edge] (4) -- (3);
		\draw[edge] (5) -- (3);
		\draw[edge] (6) -- (3);
		\draw[edge] (10) -- (7);
		\draw[edge] (10) -- (8);
		\draw[edge] (10) -- (9);
		\draw[edge] (11) -- (7);
		\draw[edge] (11) -- (8);
		\draw[edge] (11) -- (9);
		\draw[red_edge] (11) -- (10);
		\draw[purple_dotted_edge] (12) -- (10);
		\draw[purple_dotted_edge] (12) -- (11);
		\draw[purple_dotted_edge] (13) -- (10);
		\draw[purple_dotted_edge] (13) -- (11);
		\draw[blue_edge] (13) -- (12);
		\draw[edge] (14) -- (12);
		\draw[edge] (14) -- (13);
		\draw[edge] (15) -- (12);
		\draw[edge] (15) -- (13);
	\end{tikzpicture}
\end{figure}
The presence or absence of edges $v_{i,j}v_{k,l}\in E(G_\mathbf{x})$ when $v_iv_k\in E(G)$ is determined according to the structure of $H$ by an algorithm described in Section~\ref{sec:general-shadow}. For a given choice of such edges, it may be the case that for some $j_1,\dots,j_d$, that $v_{i,j_i}v_{k,j_k}\in E(G_\mathbf{x})$ for all $v_iv_k\in E(G)$. This means that the induced subgraph of $G_\mathbf{x}$ on $V(G)\setminus S \cup \{v_{i,j_i} : i\le d\}$ is isomorphic to $G$. In such a case, any $k$-coloring $c$ of $G_\mathbf{x}$ will induce a $k$-coloring $\phi$ of $G$ where $\phi(v_i)=c(v_{i,j_i})$. If multiple such choices of the $j_i$ exist, $c$ induces multiple colorings of $G$ each agreeing at all $w\in V(G)\setminus S$, and possibly at some of the vertices in $S$ depending on where different choices overlap.

\subsection{Chromatic Pairs Polynomial}
The chromatic pairs polynomial is the simplest example of a subgraph counting polynomial aside from the well-studied chromatic polynomial. The chromatic pairs polynomial, denoted $\pi_G^{(P_2)}(k)$, is defined as the function that counts the number of edges (equivalently the number of induced copies of $P_2$) in $\mathcal{C}_k(G)$. By definition, an edge in $\mathcal{C}_k(G)$ represents a pair of colorings $\phi_1,\phi_2$ of $G$ differing only at a single vertex. Given such a vertex, $v$, we can construct a shadow graph containing two induced copies of $G$ overlapping at all vertices except $v$. Such a shadow graph is constructed as follows:

For a vertex $v \in V(G)$, the $P_2$-shadow graph of $G$ on $v$, denoted $G_v^{(P_2)}$ (or simply $G_v$ by abuse of notation), is the graph obtained from $G$ where $V(G_v) = (V(G) \setminus \{v\}) \cup \{v_1, v_2\}$ and 
$E(G_v) = E(G - v) \cup \{v_1v_2\} \cup (N(v)\times\{v_1,v_2\})$.
Note that $G_{v}$ has one more vertex and $\deg(v)+1$ more edges than $G$.
Below is an example of the $P_2$-shadow graph of a given graph $G$ on a vertex $v$.
\begin{figure}[ht]
	\centering
	\begin{tikzpicture}
		\node[vertex] (0) at (-1, 0.5) {};
		\node[vertex] (1) at (-1, -0.5) {};
		\node[vertex] (2) at (-2, -0) {};
		\node[red_vertex] (3) at (-3, -0) {};
		\node[] (16) at (-3, -0.4) {$v$};
		\node[vertex] (4) at (-3.7, 0.7) {};
		\node[vertex] (5) at (-4, -0) {};
		\node[vertex] (6) at (-3.7, -0.7) {};
		\node[vertex] (7) at (1, 0) {};
		\node[vertex] (8) at (1.3, 0.7) {};
		\node[vertex] (9) at (1.3, -0.7) {};
		\node[red_vertex] (10) at (2, 0.5) {};
		\node[red_vertex] (11) at (2, -0.5) {};
		\node[] (15) at (2, 0.9) {$v_2$};
		\node[] (17) at (2, -0.9) {$v_1$};
		\node[vertex] (12) at (3, -0) {};
		\node[vertex] (13) at (4, 0.5) {};
		\node[vertex] (14) at (4, -0.5) {};
		\node[] (18) at (2.5, -1.3) {$G_v^{(P_2)}$};
		\node[] (19) at (-2.5, -0.8) {$G$};
		\draw[edge] (2) -- (0);
		\draw[edge] (2) -- (1);
		\draw[edge] (3) -- (2);
		\draw[edge] (4) -- (3);
		\draw[edge] (5) -- (3);
		\draw[edge] (6) -- (3);
		\draw[edge] (10) -- (7);
		\draw[edge] (10) -- (8);
		\draw[edge] (10) -- (9);
		\draw[edge] (11) -- (7);
		\draw[edge] (11) -- (8);
		\draw[edge] (11) -- (9);
		\draw[red_edge] (11) -- (10);
		\draw[edge] (12) -- (10);
		\draw[edge] (12) -- (11);
		\draw[edge] (13) -- (12);
		\draw[edge] (14) -- (12);
	\end{tikzpicture}
\end{figure}
\begin{lemma}\label{lem:pairs}
The chromatic pairs polynomial of $G$ is given by
$$
\pi_G^{(P_2)}(k) = \frac{1}{2} \sum_{v \in V} \pi_{G_v}(k).
$$
\end{lemma}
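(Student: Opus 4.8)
The plan is to establish a bijection between the edges of $\mathcal{C}_k(G)$ and a quotient (by a factor of $2$) of the set of pairs $(v, c)$ where $v \in V(G)$ and $c$ is a proper $k$-coloring of the shadow graph $G_v$. First I would fix a vertex $v \in V(G)$ and examine the colorings of $G_v^{(P_2)}$. By construction, the induced subgraph of $G_v$ on $(V(G) \setminus \{v\}) \cup \{v_1\}$ is isomorphic to $G$ (identifying $v_1$ with $v$), and likewise for the induced subgraph on $(V(G) \setminus \{v\}) \cup \{v_2\}$. Hence a proper $k$-coloring $c$ of $G_v$ restricts to two proper $k$-colorings $\phi_1, \phi_2$ of $G$, where $\phi_i$ agrees with $c$ on $V(G) \setminus \{v\}$ and $\phi_i(v) = c(v_i)$. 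Since $v_1 v_2 \in E(G_v)$, we have $c(v_1) \neq c(v_2)$, so $\phi_1 \neq \phi_2$ and they differ in exactly one vertex, namely $v$. Thus each proper $k$-coloring of $G_v$ yields an ordered edge $(\phi_1, \phi_2)$ of $\mathcal{C}_k(G)$ whose endpoints differ precisely at $v$.

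Next I would argue the converse: given an ordered edge $(\phi_1, \phi_2)$ of $\mathcal{C}_k(G)$ whose unique differing vertex is $v$, define $c$ on $V(G_v)$ by $c|_{V(G) \setminus \{v\}} = \phi_1|_{V(G) \setminus \{v\}} = \phi_2|_{V(G) \setminus \{v\}}$, $c(v_1) = \phi_1(v)$, and $c(v_2) = \phi_2(v)$. One checks that $c$ is a proper coloring of $G_v$: edges inside $V(G) \setminus \{v\}$ are properly colored since $\phi_1$ is proper; edges from $v_i$ to a neighbor $w \in N(v)$ are properly colored since $c(v_i) = \phi_i(v) \neq \phi_i(w) = c(w)$; and the edge $v_1 v_2$ is properly colored because $\phi_1(v) \neq \phi_2(v)$. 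These two maps are mutually inverse, so for each fixed $v$, proper $k$-colorings of $G_v$ are in bijection with ordered edges of $\mathcal{C}_k(G)$ differing exactly at $v$. Therefore $\pi_{G_v}(k)$ counts these ordered edges, and summing over all $v \in V(G)$ counts every ordered edge of $\mathcal{C}_k(G)$ exactly once (the differing vertex $v$ is unique for each edge). Since each unordered edge corresponds to two ordered edges, dividing by $2$ gives the claimed formula.

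The main point requiring care — though it is not really an obstacle — is verifying that the correspondence is genuinely a bijection for each fixed $v$, in particular that the differing vertex of an edge of $\mathcal{C}_k(G)$ is well-defined and unique (immediate from the definition of $\mathcal{C}_k(G)$), so that the sum over $v$ does not overcount or undercount. One should also note that the polynomiality of $\pi_G^{(P_2)}(k)$ then follows for free, since each $\pi_{G_v}(k)$ is a genuine chromatic polynomial and hence a polynomial in $k$, and a rational (here, $\tfrac12$-scaled) linear combination of polynomials is a polynomial; this is the prototype of the general construction promised in the introduction.
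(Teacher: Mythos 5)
Your proposal is correct and follows essentially the same argument as the paper: establish the correspondence between proper colorings of $G_v$ and edges of $\mathcal{C}_k(G)$ differing exactly at $v$, verify properness in both directions, and account for the factor of $2$ (your ``ordered edge'' bookkeeping is just a cleaner phrasing of the paper's 2-to-1 correspondence given by swapping $c(v_1)$ and $c(v_2)$). No gaps; the observation that polynomiality follows immediately is a nice, if optional, addition.
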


\begin{proof}
For any proper coloring $c$ of $G_v$, define colorings $\phi_1, \phi_2$ of $G$, where $\phi_1(u)=\phi_2(u)=c(u)$ for all $u\neq v$, and $\phi_i(v)=c(v_i)$. Since $v_1v_2 \in E(G_v)$, we have $c(v_1) \neq c(v_2)$, so $\phi_1$ and $\phi_2$ differ only at $v$, corresponding to an edge in $\mathcal{C}_k(G)$. These colorings are proper, as the induced subgraph of $G_v$ on $V(G_v)\setminus \{v_2\}$ is isomorphic to $G$, and the restriction of $c$ to this subgraph is $\phi_1$. Similarly, the induced subgraph of $G_v$ on $V(G_v)\setminus \{v_1\}$ is isomorphic to $G$, and the restriction of $c$ to this subgraph is $\phi_1$. Conversely, for any edge $\phi_1\phi_2\in E(\mathcal{C}_k(G))$, we have that $\phi_1(u)=\phi_2(u)$ for all $u\ne v$ for some $v\in V(G)$. Define $c$ as the coloring of $G_v$ where $c(u)=\phi_1(u)=\phi_2(u)$ for all $u\ne v$, $c(v_1)=\phi_1(v)$, and $c(v_2)=\phi_2(v)$. For any $w\in N(v)$, $c(v_1)\ne c(w)$, because $\phi_1$ is a proper coloring, therefore the edges $v_1w$ do not violate the proper coloring condition. The same holds for edges $v_2w$, due to the properness of $\phi_2$. Since $\phi_1$ and $\phi_2$ are assumed to differ at $v$, $c(v_1)\ne c(v_2)$, so the edge $v_1v_2\in E(G_v)$ does not violate the proper coloring condition. Therefore $c$ is a proper coloring of $G_v$. This correspondence is 2-to-1, as swapping $c(v_1)$ and $c(v_2)$ yields the same edge. The formula follows by summing over all vertices.
\end{proof}

\subsection{\texorpdfstring{$r$}{r}-Clique Polynomials}
The construction of $G_v^{(P_2)}$ can be extended to count occurrences of cliques as subgraphs of $\mathcal{C}_k(G)$. It is known that an occurrence of $K_r$ as a subgraph of $\mathcal{C}_k(G)$ must signify a single vertex changing between $r$ available colors, while the other vertices remain the same color. That is, if $K_r\cong H \subseteq \mathcal{C}_k(G)$, and $V(H)=\{\phi_1,\dots,\phi_r\}$, then $\phi_1,\dots,\phi_r$ all differ only at a single vertex. Given such a vertex, $v$, we can construct a shadow graph containing $r$ induced copies of $G$ each overlapping at all vertices except $v$ as follows:

For a vertex $v \in V(G)$, the $K_r$-shadow graph of $G$ on $v$, denoted $G_v^{(K_r)}$ (or simply $G_v$ when context is clear), is the graph obtained from $G$ where:
$V(G_v) = (V(G) \setminus \{v\}) \cup \{v_1,\ldots,v_r\}$ and
$E(G_v) = E(G - v) \cup \{v_iv_j : 1 \leq i < j \leq r\} \cup (N(v)\times \{v_1,\ldots,v_r\})$. Below is an example of the $K_5$-shadow graph of a given graph $G$ on a vertex $v$:
\begin{figure}[ht]
	\centering
	\begin{tikzpicture}
		\node[vertex] (0) at (-1, 0.5) {};
		\node[vertex] (1) at (-1, -0.5) {};
		\node[red_vertex] (2) at (-2, 0) {};
		\node[] (16) at (-2, -0.4) {$v$};
		\node[vertex] (3) at (-3, 0) {};
		\node[vertex] (4) at (-3.7, 0.7) {};
		\node[vertex] (5) at (-3.7, -0.7) {};
		\node[vertex] (6) at (-4, -0) {};

		\node[vertex] (7) at (1, -0) {};
		\node[vertex] (8) at (1.3, 0.7) {};
		\node[vertex] (9) at (1.3, -0.7) {};
		\node[vertex] (10) at (2, -0) {};
		\node[vertex] (11) at (4, 0.5) {};
		\node[vertex] (12) at (4, -0.5) {};
		\node[] (13) at (2.6, 0) {};
		\node[] (14) at (2.9, 0.7) {};
		\node[] (15) at (2.9, -0.7) {};
		\node[] (16) at (3.3, -0.4) {};
		\node[] (17) at (3.3, 0.4) {};
		\node[] (23) at (2.5, -1.5) {$G_v^{(K_5)}$};
		\node[] (24) at (-2.5, -0.8) {$G$};
		\node[] (25) at (2.3, 0.5) {$v_1$};
		\node[] (26) at (2.9, -1.1) {$v_2$};
		\node[] (27) at (3.4, -0.9) {$v_3$};
		\node[] (28) at (3.4, 0.9) {$v_4$};
		\node[] (29) at (2.9, 1.1) {$v_5$};
		\draw[edge] (2) -- (0);
		\draw[edge] (2) -- (1);
		\draw[edge] (3) -- (2);
		\draw[edge] (4) -- (3);
		\draw[edge] (5) -- (3);
		\draw[edge] (6) -- (3);
		\draw[edge] (10) -- (7);
		\draw[edge] (10) -- (8);
		\draw[edge] (10) -- (9);
		\draw[edge] (13) -- (10);
		\draw[edge, bend right = 13] (13) to (11);
		\draw[edge, bend left = 13] (13) to (12);
		\draw[edge] (14) -- (10);
		\draw[edge] (14) -- (11);
		\draw[edge, bend right = 15] (14) to (12);
		\draw[edge] (15) -- (10);
		\draw[edge, bend left = 15] (15) to (11);
		\draw[edge] (15) -- (12);
		\draw[edge, bend left = 12] (16) to (10);
		\draw[edge] (16) -- (11);
		\draw[edge] (16) -- (12);
		\draw[edge, bend right = 12] (17) to (10);
		\draw[edge] (17) -- (11);
		\draw[edge] (17) -- (12);
		\draw[red_edge] (14) -- (13);
		\draw[red_edge] (15) -- (13);
		\draw[red_edge] (15) -- (14);
		\draw[red_edge] (16) -- (13);
		\draw[red_edge] (16) -- (14);
		\draw[red_edge] (16) -- (15);
		\draw[red_edge] (17) -- (13);
		\draw[red_edge] (17) -- (14);
		\draw[red_edge] (17) -- (15);
		\draw[red_edge] (17) -- (16);
		\node[red_vertex] (18) at (2.6, 0) {};
		\node[red_vertex] (19) at (2.9, 0.7) {};
		\node[red_vertex] (20) at (2.9, -0.7) {};
		\node[red_vertex] (21) at (3.3, -0.4) {};
		\node[red_vertex] (22) at (3.3, 0.4) {};
	\end{tikzpicture}
\end{figure}

\begin{lemma}\label{lem:cliques}
The $r$-clique polynomial of $G$ is given by
$$
\pi_G^{(K_r)}(k) = \frac{1}{r!} \sum_{v \in V} \pi_{G_v}(k).
$$
\end{lemma}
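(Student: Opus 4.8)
The plan is to mirror the bijective argument proving Lemma~\ref{lem:pairs}, upgrading the $2$-to-$1$ correspondence used there to an $r!$-to-$1$ correspondence. First I would set up the forward map: given a proper $k$-coloring $c$ of $G_v^{(K_r)}$, define colorings $\phi_1,\dots,\phi_r$ of $G$ by $\phi_i(u)=c(u)$ for all $u\neq v$ and $\phi_i(v)=c(v_i)$. Each $\phi_i$ is proper because the induced subgraph of $G_v$ on $V(G_v)\setminus\{v_j : j\neq i\}$ is isomorphic to $G$ — every shadow vertex $v_i$ was given exactly the neighborhood $N(v)$ among the old vertices — and $c$ restricted to this copy of $G$ is precisely $\phi_i$. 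Since $\{v_1,\dots,v_r\}$ spans a clique in $G_v$, the colors $c(v_1),\dots,c(v_r)$ are pairwise distinct, so $\phi_1,\dots,\phi_r$ are pairwise distinct colorings that agree at every vertex other than $v$; hence $\{\phi_1,\dots,\phi_r\}$ is the vertex set of a copy of $K_r$ in $\mathcal{C}_k(G)$.

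Next I would construct the reverse direction. Take any subgraph of $\mathcal{C}_k(G)$ isomorphic to $K_r$, with vertex set $\{\phi_1,\dots,\phi_r\}$. Invoking the structural fact recalled above — that such a clique forces all $\phi_i$ to agree outside a single common vertex $v$, which is moreover uniquely determined — I would fix any bijection assigning these colorings to the shadow vertices $v_1,\dots,v_r$ and define a coloring $c$ of $G_v$ by $c(u)=\phi_1(u)$ for $u\neq v$ and $c(v_i)=\phi_i(v)$. Properness of $c$ is then verified edge by edge exactly as in Lemma~\ref{lem:pairs}: edges inside $G-v$ are satisfied because the $\phi_i$ are proper and agree there; an edge $v_iw$ with $w\in N(v)$ is satisfied because $\phi_i$ is proper at $vw$; and an edge $v_iv_j$ is satisfied because $\phi_i(v)\neq\phi_j(v)$, as $\phi_i$ and $\phi_j$ are distinct colorings differing only at $v$.

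Finally I would settle the multiplicities. The forward map records only the unordered set $\{\phi_1,\dots,\phi_r\}$, discarding which coloring sat on which shadow vertex, whereas a fixed $K_r$-subgraph is the image of exactly the $r!$ colorings of $G_v$ obtained by permuting the labels $v_1,\dots,v_r$ — these are genuinely distinct proper colorings since the values $\phi_i(v)$ are pairwise distinct — and of no proper coloring of $G_{v'}$ for any $v'\neq v$, by uniqueness of the recolored vertex. Thus $\sum_{v\in V}\pi_{G_v}(k)$ enumerates each induced copy of $K_r$ in $\mathcal{C}_k(G)$ precisely $r!$ times, and dividing by $r!$ yields the stated formula. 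The only part that requires genuine care, rather than copying the $P_2$ argument verbatim, is this $r!$-to-$1$ bookkeeping: one must confirm both that relabeling the shadow clique produces distinct colorings of $G_v$ and that there is no cross-counting among different pivot vertices $v$, the latter being exactly the uniqueness of the recolored vertex.
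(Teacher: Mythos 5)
Your proposal is correct and follows essentially the same route as the paper: the same forward map sending a proper coloring of $G_v^{(K_r)}$ to the $r$ colorings obtained by reading off the shadow clique, the same reverse construction from a $K_r$ in $\mathcal{C}_k(G)$ using the fact that such a clique arises from a single vertex switching among $r$ colors, and the same $r!$-to-$1$ count from permuting the shadow labels. Your explicit remark that the recolored vertex $v$ is unique (so no cross-counting occurs across different pivot vertices) is a slightly more careful articulation of a point the paper leaves implicit, but it is not a different argument.
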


\begin{proof}
For any proper coloring $c$ of $G_v$, define colorings $\phi_1,\ldots,\phi_r$ of $G$, where $\phi_i(u)=c(u)$ for all $u\neq v$, and $\phi_i(v)=c(v_i)$. Since $v_iv_j \in E(G_v)$ for all $i\neq j$, these $r$ colorings all differ from each other only at $v$, forming a clique in $\mathcal{C}_k(G)$. Conversely, for an $r$-clique $K_r\cong H\subseteq \mathcal{C}_k(G)$, where $V(H)=\{\phi_1,\dots,\phi_r\}$, we know that $\phi_1,\dots,\phi_r$ all differ at a single vertex, which we will call $v$. We can construct a coloring $c$ of $G_v$ where $c(w)=\phi_1(w)=\dots=\phi_r(w)$ for all $w \in V(G)\setminus \{v\}$, and $c(v_i)=\phi_i(v)$. By the properness of $\phi_i$, $v_i$ does not share a color with any $w\in N(v)$, and since the $\phi_i$ were assumed to be distinct colorings, $c(v_i)\ne c(v_j)$ for any $i,j\le r$. This correspondence is $r!$-to-1, as permuting the colors $\{c(v_i)\}_{i=1}^r$ yields the same clique, and for any clique there are $r!$ ways of picking the labels $\phi_i$, each yielding distinct colorings of $G_v$. The formula follows by summing over $v \in V(G)$.
\end{proof}

\subsection{Polynomials Counting Cartesian Products of Cliques}\label{subsec:cartesian}
The construction of $G_v^{(K_r)}$ can be extended further still to count a family of subgraphs that arises from multiple vertices independently switching colors. Suppose we have such a set of vertices $\{v_1,\dots,v_d\}\subseteq V(G)$, such that across some set of colorings in $\mathcal{C}_k(G)$, each vertex $v_i$ picks its color out of a color palette $\{c_{i,1},\dots,c_{i,r_i}\}$. Consider the subgraph induced by all such color choices, assuming the choices can be made independently (with the left over vertices remaining the same color). Any such coloring $\phi$ can be represented by a tuple $(j_1,\dots,j_d)\in \{1,\dots,r_1\}\times\dots\times\{1,\dots,r_d\}$, where $\phi(v_i)=c_{i,j_i}$. Two colorings are adjacent if and only if their respective tuples differ at a single index. This is the construction for the graph $H=\mathop{\square}_{i=1}^d K_{r_i}$, a Cartesian product of complete graphs. Conversely, it is known that an induced Cartesian product of cliques can only arise as a subgraph of $\mathcal{C}_k(G)$ by some set of $d$ vertices independently switching between their respective $r_i$ colors in this way. To construct a shadow graph to count occurrences of such a behavior, we do the following:

Given positive integers $r_1,\dots, r_d$, construct a shadow graph $G_\mathbf{x}$ of $G$ on a $d$-tuple of vertices $\mathbf{x}=(v_1,\dots,v_d)$ by replacing each vertex $v_i\in \mathbf{x}$ with an $r_i$-clique of vertices $v_{i,1},\dots,v_{i,r_i}$. Add edges between each $v_{i,j}$ and all neighbors of $v_i$ in $G$, excluding those neighbors also in $\mathbf{x}$. For any pair of vertices $v_i,v_k\in \mathbf{x}$ where $v_iv_k\in E(G)$, add all edges $v_{i,j}v_{k.l}$ between their corresponding shadow cliques. Below is such an example of such a construction with $r_1=3,r_2=2,r_3=3$, and $\mathbf{x}=(v_1,v_2,v_3)$ on a graph $G\cong P_5$:
\begin{figure}[ht]
	\centering
	\begin{tikzpicture}
		\node[vertex] (0) at (-1, 0) {};
		\node[blue_vertex] (1) at (-2, 0) {};
		\node[] (15) at (-2, -0.4) {$v_3$};
		\node[red_vertex] (2) at (-3, 0) {};
		\node[] (16) at (-3, -0.4) {$v_2$};
		\node[vertex] (3) at (-4, 0) {};
		\node[green_vertex] (4) at (-5, 0) {};
		\node[] (17) at (-5, -0.4) {$v_1$};
		\node[green_vertex] (5) at (0.8, 0.5) {};
		\node[green_vertex] (6) at (0.8, -0.5) {};
		\node[green_vertex] (7) at (1.2, -0) {};
		\node[vertex] (8) at (2, -0) {};
		\node[red_vertex] (9) at (3, 0.5) {};
		\node[red_vertex] (10) at (3, -0.5) {};
		\node[blue_vertex] (11) at (3.8, 0.5) {};
		\node[blue_vertex] (12) at (3.8, -0.5) {};
		\node[blue_vertex] (13) at (4.2, 0) {};
		\node[vertex] (14) at (5, -0) {};
		\draw[edge] (1) -- (0);
		\draw[purple_edge] (2) -- (1);
		\draw[edge] (3) -- (2);
		\draw[edge] (4) -- (3);
		\draw[green_edge] (6) -- (5);
		\draw[green_edge] (7) -- (5);
		\draw[green_edge] (7) -- (6);
		\draw[edge] (8) -- (5);
		\draw[edge] (8) -- (6);
		\draw[edge] (8) -- (7);
		\draw[edge] (9) -- (8);
		\draw[edge] (10) -- (8);
		\draw[red_edge] (10) -- (9);
		\draw[purple_edge] (11) -- (9);
		\draw[purple_edge] (11) -- (10);
		\draw[purple_edge] (12) -- (9);
		\draw[purple_edge] (12) -- (10);
		\draw[purple_edge] (13) -- (9);
		\draw[purple_edge] (13) -- (10);
		\draw[blue_edge] (12) -- (11);
		\draw[blue_edge] (13) -- (11);
		\draw[blue_edge] (13) -- (12);
		\draw[edge] (14) -- (11);
		\draw[edge] (14) -- (12);
		\draw[edge] (14) -- (13);
		\node[] (18) at (-3, -0.9) {$P_5$};
		\node[] (19) at (3, -1) {$(P_5)_\mathbf{x}$};
	\end{tikzpicture}
\end{figure}

\begin{lemma}
Let $H = \mathop{\square}_{i=1}^d K_{r_i}$ be the Cartesian product. Then
$$
\pi_G^{(H)}(k) = \frac{1}{\prod_{i=1}^d r_i! \cdot \prod_{j=1}^{\ell} m_j!} \sum_{\mathbf{x} \in (V(G))_d} \pi_{G_\mathbf{x}}(k),
$$
\end{lemma}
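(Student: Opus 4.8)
The plan is to mirror the bijective arguments used for $H\cong P_2$ and $H\cong K_r$ in Lemmas~\ref{lem:pairs} and~\ref{lem:cliques}, now accounting for the product structure of $H$ and for the fact that the sum runs over \emph{ordered} tuples $\mathbf{x}$. Here $\ell$ denotes the number of distinct values occurring among $r_1,\dots,r_d$ and $m_1,\dots,m_\ell$ their multiplicities, so that $\prod_{j=1}^{\ell} m_j!$ counts the permutations of $\{1,\dots,d\}$ fixing the tuple $(r_1,\dots,r_d)$. Writing $[r]=\{1,\dots,r\}$, the goal is to exhibit a surjection from pairs $(\mathbf{x},c)$, where $\mathbf{x}\in(V(G))_d$ and $c$ is a proper $k$-coloring of $G_\mathbf{x}$, onto induced copies of $H$ in $\mathcal{C}_k(G)$, and to show every fiber has the same size $\prod_{i=1}^{d} r_i!\cdot\prod_{j=1}^{\ell} m_j!$. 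Since $\sum_{\mathbf{x}\in(V(G))_d}\pi_{G_\mathbf{x}}(k)$ is exactly the number of such pairs, the formula follows by dividing.

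First I would construct the forward map. Fix $\mathbf{x}=(v_1,\dots,v_d)$ and a proper $k$-coloring $c$ of $G_\mathbf{x}$. For each $\mathbf{j}=(j_1,\dots,j_d)\in[r_1]\times\cdots\times[r_d]$ define a coloring $\phi_\mathbf{j}$ of $G$ by $\phi_\mathbf{j}(w)=c(w)$ for $w\notin\mathbf{x}$ and $\phi_\mathbf{j}(v_i)=c(v_{i,j_i})$. Each $\phi_\mathbf{j}$ is proper because the induced subgraph of $G_\mathbf{x}$ on $(V(G)\setminus\mathbf{x})\cup\{v_{i,j_i}:i\le d\}$ is isomorphic to $G$ --- for $v_iv_k\in E(G)$ the edge $v_{i,j_i}v_{k,j_k}$ lies in $G_\mathbf{x}$ by the Cartesian-product construction --- and $\phi_\mathbf{j}$ is the restriction of $c$ to this copy. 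Since each $v_{i,1},\dots,v_{i,r_i}$ form a clique, $c(v_{i,j_i})=c(v_{i,j_i'})$ precisely when $j_i=j_i'$, so the $\phi_\mathbf{j}$ are pairwise distinct and $\phi_\mathbf{j}$ differs from $\phi_{\mathbf{j}'}$ on $G$ at exactly the vertices $\{v_i:j_i\neq j_i'\}$. Hence $\phi_\mathbf{j}\sim\phi_{\mathbf{j}'}$ in $\mathcal{C}_k(G)$ iff $\mathbf{j}$ and $\mathbf{j}'$ differ in exactly one coordinate, so $\{\phi_\mathbf{j}\}_\mathbf{j}$ spans an induced copy of $H=\mathop{\square}_{i=1}^{d}K_{r_i}$.

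Next I would build the converse and identify the fibers. Given an induced copy $H'\subseteq\mathcal{C}_k(G)$, invoke the known characterization quoted before the lemma: $H'$ arises from a set $U$ of $d$ vertices, the $u\in U$ switching independently over palettes $P(u)$ whose sizes form the multiset $\{r_1,\dots,r_d\}$, with all colorings of $H'$ agreeing off $U$ and realizing every combination in $\prod_{u\in U}P(u)$; properness of these colorings forces $P(u)\cap P(u')=\emptyset$ whenever $uu'\in E(G)$. For any ordering $\mathbf{x}=(v_1,\dots,v_d)$ of $U$ with $|P(v_i)|=r_i$ and any bijections $\beta_i\colon[r_i]\to P(v_i)$, set $c(w)$ equal to the common value of $H'$ at $w$ for $w\notin U$ and $c(v_{i,j})=\beta_i(j)$; checking each edge type of $G_\mathbf{x}$ against properness of a suitably chosen coloring in $H'$ shows $c$ is proper, and $(\mathbf{x},c)$ maps to $H'$. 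Conversely any pair in the fiber over $H'$ has this form: the $\phi_\mathbf{j}$ all agree off $\mathbf{x}$, forcing $\mathbf{x}\subseteq U$ and hence, by cardinality, $\mathbf{x}=U$ as sets with the palette sizes matched to $(r_1,\dots,r_d)$, after which $c$ is determined by the bijections $\beta_i$. There are $\prod_{j=1}^{\ell}m_j!$ admissible orderings of $U$ and $\prod_{i=1}^{d}r_i!$ choices of the $\beta_i$, and distinct choices give distinct pairs, so every fiber has size $\prod_{i=1}^{d}r_i!\cdot\prod_{j=1}^{\ell}m_j!$.

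The step I expect to be the main obstacle is the fiber count: one must keep the two sources of overcounting cleanly separated --- permuting colors within each shadow clique, contributing $\prod_i r_i!$, versus reordering the positions of $\mathbf{x}$ among blocks of equal $r_i$, contributing $\prod_j m_j!$ --- and confirm that the copies of $G$ sitting inside $G_\mathbf{x}$ recover \emph{all} colorings of $H'$ rather than a proper subgraph, which is exactly where the independent-switching characterization does the essential work.
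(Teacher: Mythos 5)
Your proposal is correct and follows essentially the same route as the paper: colorings of $G_\mathbf{x}$ produce the family $\{\phi_{\mathbf{j}}\}$ forming an induced copy of $\mathop{\square}_{i=1}^d K_{r_i}$, the converse uses the quoted independent-switching characterization, and the overcount splits into $\prod_i r_i!$ palette relabelings and $\prod_j m_j!$ reorderings of positions with equal $r_i$ (your fiber-counting framing is, if anything, slightly more careful about the induced condition than the paper's argument). One tiny slip: agreement of the $\phi_{\mathbf{j}}$ off $\mathbf{x}$ gives $U\subseteq\mathbf{x}$, while $\mathbf{x}\subseteq U$ follows because each shadow clique forces its base vertex to actually change color; either inclusion plus $|U|=d$ yields $\mathbf{x}=U$, so the argument stands.
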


\begin{proof}
For any proper coloring $c$ of $G_\mathbf{x}$, define colorings $\phi_{(j_1,\dots,j_d)}$ for all $(j_1,\dots,j_d)\in \{1,\dots,r_1\}\times\dots\times\{1,\dots,r_d\}$, wherein $\phi_{(j_1,\dots,j_d)}(v_i)=c(v_{i,j_i})$. Across all tuples, the vertices in $\mathbf{x}$ take on all possible color combinations within the color palette contained within each vertex's shadow clique. Because $v_{i,j_i}v_{k,j_k} \in E(G_\mathbf{x})$ whenever $v_iv_k\in E(G)$, we must have that $\phi_{(j_1,\dots,j_d)}(v_i)\ne \phi_{(j_1,\dots,j_d)}(v_k)$, therefore $\phi_{(j_1,\dots,j_d)}$ is a proper coloring. Conversely, for some $\mathop{\square}_{i=1}^d K_{r_i}\cong H \subseteq \mathcal{C}_k(G)$, we know that across all $k$-colorings of $G$ in $V(H)$, there must be some set of vertices $v_1,\dots,v_d$, with $v_i$ switching between $r_i$ different colors $c_{i,1},\dots, c_{i,r_i}$ independently of the color choice of the other vertices. The subgraph $H$ represents all such possible color choices, which can be indexed as $\phi_{(j_1,\dots,j_d)}$, where $\phi_{(j_1,\dots,j_d)}(v_i)=c_{i,j_i}$. Let $\mathbf{x}=(v_1,\dots,v_d)$, and $S=\{v_{i,j} : i\le d,\; j\le r_i\}$. We can construct a coloring $c$ of $G_\mathbf{x}$ where $c(w)=\phi(w)$ for all $\phi \in V(H)$ when $w\notin S$, and where $c(v_{i,j})=c_{i,j}$. The edges $w_1w_2\in E(G_\mathbf{x})$ where $w_1,w_2\notin S$ do not violate the proper coloring condition as $w_1w_2\in \phi$ for all $\phi \in V(H)$, and $V(H)$ consists of proper colorings. The edges $w v_{i,j} \in E(G_\mathbf{x})$ where $w\notin S$ do not violate the proper coloring condition, as $\phi_{(j_1,\dots,j_d)}(v_i)\ne \phi_{(j_1,\dots,j_d)}(w)$. The edges $v_{i,j}v_{i,l}\in E(G_\mathbf{x})$ do not violate the proper coloring condition as the shadow clique of $v_i$ is colored with the color palette of $v_i$ which by definition does not contain repeat colors. The edges $v_{i,j}v_{k,l}\in E(G_\mathbf{x})$ do not violate the proper coloring condition as distinct vertices $v_i,v_k$ are assumed to be able to switch between colors in their respective color palettes independently of the other color choices. Therefore adjacent vertices have non-overlapping color palettes within $H$.

The factor $\prod_{i=1}^d r_i!$ accounts for the permutations of colors among the shadows of each $v_i$. The additional division by $\prod_{j} m_j!$ corrects for over-counting when identical $r_i$ values occur multiple times. For instance, if $r_1=r_2$, then $G_{(v_1, v_2, \ldots)}\cong G_{(v_2, v_1, \ldots)}$, and they will count the same Cartesian products of cliques, despite being generated by different $d$-tuples.

Summing over all ordered tuples and dividing by these factors counts instances of $\mathop{\square}_{i=1}^d K_{r_i}$.
\end{proof}

\subsection{Calculating \texorpdfstring{$\pi_G^{(C_6)}(k)$}{π\textasciicircum{}(C₆)(k)}}

So far all of the subgraphs we have been counting can only appear in $\mathcal{C}_k(G)$ by a single, predictable pattern of vertex recolorings. An edge can only appear as a result of one vertex changing between 2 colors. A clique can only appear as a result of one vertex changing between multiple colors. A Cartesian product of cliques can only appear from a set of vertices independently changing between a set of colors. For general $H$, this is not always the case. For instance, it is established in \cite{ABFR17} that an induced copy of $C_6$ can appear in one of two ways, either by two vertices switching between three colors, or by three vertices switching between two colors. For completeness, we include a short proof:
\begin{proof}
    If only one vertex changes, the sequence degenerates into a $K_6$ as repeated color changes on the same vertex would make every coloring in the sequence adjacent. 

    If two vertices change color, neither can change color more than 3 times, as that would mean one vertex changes its color multiple times consecutively, inducing a triangle. Since $C_6$ does not contain a triangle, each can change at most 3 times in total. Since there are only 2, they must each change 3 times exactly.

    If three vertices change color, each must change color at least twice, as traversing all edges of the $C_6$ must end at the same coloring as the start. Since there are 3 vertices, each must change exactly 2 times.
    
    If more than three vertices change color at some point, they all must change back to their original color at some point due to the cyclic nature of the hexagon. This would require more than 6 edges, a contradiction.
\end{proof}

We begin by counting occurrences of $C_6\cong H\subseteq \mathcal{C}_k(G)$, where only 2 vertices $v_1,v_2\in V(G)$ change color across all edges in $H$. As established, each vertex has a color palette of three colors, $\{c_{1,1},c_{1,2},c_{1,3}\}$ and $\{c_{2,1},c_{2,2},c_{2,3}\}$ respectively. Any coloring $\phi$ in $V(H)$ can then be identified with a tuple $(i,j)$ signifying that $\phi(v_1)=c_{1,i}$ and $\phi(v_2)=c_{2,j}$. The only valid recoloring sequence up to relabeling of $v_1$, $v_2$, and their respective color palettes is
$$
(1,1)\to (2,1)\to(2,2)\to(3,2)\to(3,3)\to(1,3)\to(1,1)
$$

Each pair $(i,j)$ represents a coloring state in $H$. Adjacent colorings in the sequence differ at exactly one vertex, ensuring proper adjacency. Moreover, any two colorings corresponding to non-adjacent vertices in $H$ do not differ by the color of only one vertex, which ensures the subgraph is induced.

To count these hexagons, we construct a shadow graph $G_{(v_1,v_2)}$ by the following construction on $G$ with vertices $v_1$ and $v_2$ selected:
\begin{itemize}
    \item Replace $v_1$ and $v_2$ each with a triangle containing vertices $v_{1,1},v_{1,2},v_{1,3}$ and $v_{2,1},v_{2,2},v_{2,3}$
    \item Add edges between $v_{1,1},v_{1,2},v_{1,3}$ and all vertices in $N(v_1)\setminus \{v_2\}$, and add edges between $v_{2,1},v_{2,2},v_{2,3}$ and $N(v_2)\setminus \{v_1\}$
    \item If $v_1v_2\in E(G)$, connect $v_{1,1},v_{1,2},v_{1,3},v_{2,1},v_{2,2},v_{2,3}$ such that each state in the recoloring sequence, $(i,j)$ is assigned an edge $v_{1,i} v_{2,j}$. For this recoloring sequence, we add edges $v_{1,1}v_{2,1},v_{1,2}v_{2,1},v_{1,2}v_{2,2},v_{1,3}v_{2,2},v_{1,3}v_{2,3},v_{1,1}v_{2,3}$
\end{itemize}
An example of such a $G_{(v_1,v_2)}$ is shown below for $G=P_5$ and $v_1,v_2$ given:
\begin{figure}[ht]
    \centering
    \begin{tikzpicture}
        \node[vertex] (0) at (0, 0) {};
        \node[red_vertex] (1) at (-4, 0) {};
        \node[] (21) at (-4, -0.4) {$v_1$};
        \node[blue_vertex] (2) at (-3, 0) {};
        \node[] (22) at (-3, -0.4) {$v_2$};
        \node[vertex] (3) at (-2, 0) {};
        \node[vertex] (4) at (-1, 0) {};
        \node[vertex] (5) at (6, -0) {};
        \node[red_vertex] (6) at (1.8, 0.5) {};
        \node[] (26) at (1.8, 0.9) {$v_{1,2}$};
        \node[red_vertex] (7) at (1.8, -0.5) {};
        \node[] (27) at (1.8, -0.9) {$v_{1,1}$};
        \node[red_vertex] (8) at (2.2, 0) {};
        \node[] (28) at (1.4, 0) {$v_{1,3}$};
        \node[blue_vertex] (9) at (2.8, 0.5) {};
        \node[] (29) at (2.8, 0.9) {$v_{2,2}$};
        \node[blue_vertex] (10) at (3.2, 0) {};
        \node[] (30) at (2.8, -0.9) {$v_{2,1}$};
        \node[blue_vertex] (11) at (2.8, -0.5) {};
        \node[] (31) at (3.4, 0.5) {$v_{2,3}$};
        \node[vertex] (12) at (4, 0) {};
        \node[vertex] (13) at (5, -0) {};
        \draw[edge] (4) -- (0);
        \draw[purple_edge] (2) -- (1);
        \draw[edge] (3) -- (2);
        \draw[edge] (4) -- (3);
        \draw[edge] (13) -- (5);
        \draw[red_edge] (7) -- (6);
        \draw[red_edge] (8) -- (6);
        \draw[red_edge] (8) -- (7);
        \draw[purple_edge] (9) -- (6);
        \draw[purple_edge] (9) -- (8);
        \draw[purple_edge] (10) -- (7);
        \draw[purple_edge] (10) -- (8);
        \draw[blue_edge] (10) -- (9);
        \draw[purple_edge, bend right = 30] (11) to (6);
        \draw[purple_edge] (11) -- (7);
        \draw[blue_edge] (11) -- (9);
        \draw[blue_edge] (11) -- (10);
        \draw[edge] (12) -- (9);
        \draw[edge] (12) -- (10);
        \draw[edge] (12) -- (11);
        \draw[edge] (13) -- (12);
		\node[] (40) at (-2, -0.9) {$P_5$};
		\node[] (41) at (4, -1.1) {$(P_5)_{(v_1,v_2)}$};
    \end{tikzpicture}

\end{figure}

These constraints ensure that the chromatic polynomial of $G_{(v_1,v_2)}$ counts only colorings corresponding to a recoloring sequence forming an induced $C_6$. For any $k$-coloring $c$ of $G_{(v_1,v_2)}$, consider the colorings $\phi_{(i,j)}$ for which $\phi_{(i,j)}(v_1)=c(v_{1,i})$, $\phi_{(i,j)}(v_2)=c(v_{2,j})$, and $\phi_{(i,j)}(w)=c(w)$ otherwise. Whenever $(i,j)$ is one of the states in the recoloring sequence, $\phi_{(i,j)}$ is a proper coloring. Edges between pairs of non-shadow vertices do not violate the proper coloring condition, as they inherit the properness of their corresponding edge in $c$. Similarly, edges from $v_1$ or $v_2$ to their non-shadow neighbors do not violate the properness of the coloring, because edges $wv_{1,i}$ (respectively $wv_{2,i}$) exist in $G_{(v_1,v_2)}$. Finally, the edge $v_1v_2$ does not violate the properness of the coloring if it is present, as $v_{1,i}v_{2,j}$ is present for any $(i,j)$ in the recoloring sequence. Conversely, consider some set of colorings $\phi_{(i,j)}$ of $G$ forming an induced $C_6$ in $\mathcal{C}_k(G)$, where $\phi_{(i,j)}(v_1)=c_{1,i}$ and $\phi_{(i,j)}(v_2)=c_{2,j}$ for some pair of vertices $(v_1,v_2)$. Let it also be the case that $v_1,v_2$ and $c_{1,i},c_{2,j}$ are chosen such that for a chosen traversal around the $C_6$, the order at which we traverse the $\phi_{(i,j)}$ matches the earlier recoloring sequence. Given this, we can construct a proper coloring $c$ of $G_{(v_1,v_2)}$. Let $c(w)=\phi_{(i,j)}(w)$ for all $w\notin \{v_1,v_2\}$ for all $\phi_{(i,j)}$. Let $c(v_{1,i})=c_{1,i}$ and $c(v_{2,j})=c_{2,j}$. Since our choice of $v_1,v_2$ and $c_{1,i},c_{2,j}$ were chosen to align with a chosen traversal of the $C_6$ matching the recoloring sequence, all edges $v_{1,i}v_{2,j}$ do not violate the properness of the coloring $c$. The remaining edges inherit their properness from the $\phi_{(i,j)}$ in the same way as in the Cartesian product of cliques. Across all $k$-colorings of $G_{(v_1,v_2)}$ for all ordered pairs of distinct vertices $(v_1,v_2)$, the correspondence is $|\operatorname{Aut}(C_6)| = 12$-to-1, as a specific coloring of a specific $G_{(v_1,v_2)}$ corresponds to a specific traversal of an induced $C_6$ in $\mathcal{C}_k(G)$, of which each $C_6$ has exactly 12. Summing over all ordered pairs of vertices $(v_1, v_2)$ in $G$ and dividing by 12 gives a polynomial equation counting induced hexagons in $G$ which arise from exactly 2 vertices changing color:
$$
\frac{1}{12} \sum_{(v_1,v_2) \in (V(G))_2} \pi_{G_{(v_1,v_2)}}(k).
$$

When three vertices $v_1$, $v_2$, and $v_3$ all change color across vertices of an induced $C_6\cong H \subseteq \mathcal{C}_k(G)$, they have color palettes of $\{c_{1,1},c_{1,2}\}$, $\{c_{2,1},c_{2,2}\}$, and $\{c_{3,1},c_{3,2}\}$, respectively. Each coloring $\phi$ in $V(H)$ be associated a tuple $(i, j, k)$, representing that $\phi(v_1)=c_{1,i}$, $\phi(v_2)=c_{2,j}$, $\phi(v_3)=c_{3,k}$

The only valid set of such tuples (up to relabeling $v_1,v_2,v_3$, and $c_{1,1},c_{1,2},c_{2,1},c_{2,2},c_{3,1},c_{3,2}$) representing an induced hexagon in which 3 vertices all change color is, in order of a traversal around the $C_6$:
$$
(1,1,1) \to (2,1,1) \to (2,2,1) \to (2,2,2) \to (1,2,2) \to (1,1,2) \to (1,1,1).
$$
As before, we construct a shadow graph $G_{(v_1,v_2,v_3)}$ by the following construction on $G$ with vertices $v_1$, $v_2$, and $v_3$ selected:
\begin{itemize}
    \item Replace $v_1,v_2,v_3$ each with a copy of $P_2$ containing vertices $v_{1,1},v_{1,2}$, $v_{2,1},v_{2,2}$, and $v_{3,1},v_{3,2}$.
    \item Add edges between $v_{1,1},v_{1,2}$ and each vertex in $N(v_1)\setminus\{v,w\}$. Likewise with $v_{2,1},v_{2,2}$ and $v_{3,1},v_{3,2}$.
    \item Add edges between $v_{1,1},v_{1,2},v_{2,1},v_{2,2},v_{3,1},v_{3,2}$ according to the coloring states in the sequence. If $v_1v_2\in E(G)$ and the sequence contains a state $(i, j, k)$, then we add an edge $v_{1,i}v_{2,j}$. Likewise if $v_1v_3\in E(G)$ or $v_2v_3\in E(G)$.
\end{itemize}
An example of such a $G_{(v_1,v_2,v_3)}$ is shown below for $G=P_5$ and $v_1,v_2,v_3$ given:
\begin{figure}[ht]
    \centering
    \begin{tikzpicture}
        \node[vertex] (0) at (-1, -0) {};
        \node[blue_vertex] (1) at (-2, 0) {};
        \node[] (21) at (-2, -0.4) {$v_3$};
        \node[red_vertex] (2) at (-3, 0) {};
        \node[] (22) at (-3, -0.4) {$v_2$};
        \node[vertex] (3) at (-4, 0) {};
        \node[green_vertex] (4) at (-5, -0) {};
        \node[] (23) at (-5, -0.4) {$v_1$};
        \node[green_vertex] (5) at (1, 0.5) {};
        \node[] (24) at (1, 0.9) {$v_{1,2}$};
        \node[green_vertex] (6) at (1, -0.5) {};
        \node[] (25) at (1, -0.9) {$v_{1,1}$};
        \node[vertex] (7) at (2, -0) {};
        \node[red_vertex] (8) at (3, 0.5) {};
        \node[] (26) at (3, 0.9) {$v_{2,2}$};
        \node[red_vertex] (9) at (3, -0.5) {};
        \node[] (27) at (3, -0.9) {$v_{2,1}$};
        \node[blue_vertex] (10) at (4, 0.5) {};
        \node[] (28) at (4, 0.9) {$v_{3,2}$};
        \node[blue_vertex] (11) at (4, -0.5) {};
        \node[] (29) at (4, -0.9) {$v_{3,1}$};
        \node[vertex] (12) at (5, -0) {};
        \draw[edge] (1) -- (0);
        \draw[purple_edge] (2) -- (1);
        \draw[edge] (3) -- (2);
        \draw[edge] (4) -- (3);
        \draw[green_edge] (6) -- (5);
        \draw[edge] (7) -- (5);
        \draw[edge] (7) -- (6);
        \draw[edge] (8) -- (7);
        \draw[edge] (9) -- (7);
        \draw[red_edge] (9) -- (8);
        \draw[purple_edge] (10) -- (8);
        \draw[purple_edge] (10) -- (9);
        \draw[purple_edge] (11) -- (8);
        \draw[purple_edge] (11) -- (9);
        \draw[blue_edge] (11) -- (10);
        \draw[edge] (12) -- (10);
        \draw[edge] (12) -- (11);
		\node[] (40) at (-3, -0.9) {$P_5$};
		\node[] (41) at (3, -1.3) {$(P_5)_{(v_1,v_2,v_3)}$};
    \end{tikzpicture}
\end{figure}

As before, the chromatic polynomial of the shadow graph counts valid colorings corresponding to the recoloring sequence. Given any $k$-coloring $c$ of $G_{(v_1,v_2,v_3)}$, we can create $k$-colorings $\phi_{(i,j,k)}$ of $G$ for each tuple $(i,j,k)$ in the earlier recoloring sequence, such that $\phi_{(i,j,k)}(v_1)=c(v_{1,i}),\phi_{(i,j,k)}(v_2)=c(v_{2,j}),\phi_{(i,j,k)}(v_3)=c(v_{3,k})$, and $\phi_{(i,j,k)}(w)=c(w)$ otherwise. The edges $w_1w_2$ for $w_1,w_2\notin \{v_1,v_2,v_3\}$ do not violate the properness of $\phi_{(i,j,k)}$ as they inherit their properness from $c$, as do the edges between $v_1,v_2,v_3$ and their respective neighbors (excluding eachother), as before. The edges $v_1v_2,v_1v_3,v_2v_3$, if they exist, do not violate the properness of $\phi_{(i,j,k)}$, as $(i,j,k)$ is assumed to be in the recoloring sequence, so the necessary edges $v_{1,i}v_{2,j}, v_{1,i}v_{3,k},v_{2,j}v_{3,k}$ were added to $G_{(v_1,v_2,v_3)}$. Conversely, given an induced subgraph $C_6\cong H\subseteq \mathcal{C}_k(G)$ on which 3 vertices change color, by picking a traversal around $H$ we can label these vertices as $v_1,v_2,v_3$ according to the order they change color, and label their color palettes $\{c_{1,1},c_{1,2}\}$, $\{c_{2,1},c_{2,2}\}$, and $\{c_{3,1},c_{3,2}\}$ according to the order the vertices each take on each color (such that $c_{1,1}$ is the initial color of $v_1$ in the traversal, etc). The correspondence is again $|\text{Aut}(C_6)| = 12$-to-1, as each $k$-coloring of some $G_{(v_1,v_2,v_3)}$ corresponds to a specific traversal around the vertices in an induced $C_6$ in $\mathcal{C}_k(G)$. By picking a specific canonical traversal around the vertices of a given $C_6$, all other traversals can be identified with the automorphism of $C_6$ that sends it to the canonical traversal. Summing over all ordered triples $(v_1, v_2, v_3)$ in $G$ and dividing by $12$ gives the contribution of these sequences to $\pi_G^{(C_6)}(k)$. 

In total:

$$
\pi_G^{(C_6)}(k) = \frac{1}{12} \left( \sum_{(v_1,v_2) \in (V(G))_2} \pi_{G_{(v_1,v_2)}}(k) + \sum_{(v_1, v_2, v_3) \in (V(G))_3} \pi_{G_{(v_1,v_2,v_3)}}(k) \right)
$$

\section{General Construction of a Subgraph Counting Polynomial}\label{sec:general-shadow}

In this section, we fully extend and formalize the construction of subgraph counting polynomials introduced in the previous section for an arbitrary subgraph $H\subseteq\mathcal{C}_k(G)$. We extend the ideas of shadow graphs developed in earlier sections to describe how any subgraph-counting polynomial can be constructed.

\subsection{General Construction for Arbitrary Connected Subgraphs}

To compute $\pi_G^{(H)}(k)$ for an arbitrary connected subgraph $H$, we first determine what underlying behaviors on colorings could cause $H$ to arise as an induced subgraph of some $\mathcal{C}_k(G)$. First, we formalize the concept of an underlying behavior by classifying the behavior of a given induced $H$-subgraph of $\mathcal{C}_k(G)$. Then, we determine necessary conditions for a behavior to give rise to an induced $H$-subgraph. We then show that such a behavior does indeed induce a copy of $H$ wherever it appears in $\mathcal{C}_k(G)$. The behaviors will be classified relative to a specific ordered traversal of a spanning tree of $H$

In any induced $H$-subgraph of $\mathcal{C}_k(G)$, we observe that some set of $d$ vertices change color while the rest remain the same. Let us denote the vertices of this $H$-subgraph as $\phi_1, \dots, \phi_{|V(H)|}$, ordered according to our chosen traversal of the spanning tree. From this, we can create a canonical ordered tuple $\mathbf{x}=(v_1,\dots,v_d)\in (V(G))_d$ of the color-changing vertices in $G$. Here, $v_i$ is the $i$th vertex to change from its initial color during our selected traversal. By using a spanning tree, we ensure that each coloring other than $\phi_1$ differs from at least one previous coloring by exactly one vertex, so that the ordering of $\mathbf{x}$ is unambiguous. We can also assign to each vertex $v_i$ a canonical ordered color palette $\mathcal{P}_i=(c_{i,1},\dots,c_{i,r_i})$ such that $c_{i,1}$ is the initial color of $v_i$, and $c_{i,j}$ is the $j$th color that $v_i$ exhibits as we traverse the spanning tree. These orderings allow us to define a map $f\colon V(H) \to \mathbb{N}^d$ of coloring states of $G$, such that $f(\phi_k)=(j_1,\dots,j_d)$, implies that $\phi_k(v_i)=c_{i,j_i}$. In particular, since we have ordered both the vertices and the colors, $f(\phi_1)=(1,1,\dots, 1)$, and $f(\phi_2)=(2,1,\dots, 1)$. We will consider each $f$ to model the underlying behavior that gives rise to an $H$-subgraph. Whether or not a given $f$ models the underlying behavior of a certain $H$-subgraph of $\mathcal{C}_k(G)$ is a question that relies heavily on $G$ and the chosen $H$-subgraph. However, whether or not some $f$ \emph{can} model the underlying behavior of some $H$-subgraph of some coloring graph is a question that relies only on $H$ and our chosen traversal of $H$.

For a fixed ordered traversal $\mathcal{O}=(u_1, u_2,\dots u_{|V(H)|})$ of the vertices in a spanning tree of $H$, a map $f\colon V(H) \to \mathbb{N}^d$ with $d<|V(H)|$ is called a \emph{valid state map} if:
\begin{itemize}
    \item $f$ is injective, ensuring multiple vertices in $H$ are not assigned to the same vertex in $\mathcal{C}_k(G)$. If two vertices $u,u'\in V(H)$ both mapped to the same tuple $(j_1,\dots,j_d)$, this would signify that each color changing vertex $v_i$ is colored $c_{i,j_i}$ at two different vertices in $H$. Since all other vertices are assumed to remain the same color, these colorings would be the same, contradicting them representing distinct vertices in $H$. Since $f$ should represent the general underlying behavior causing $H$ to appear, only injective $f$ should be included.
    \item Two vertices in $H$ have tuples differing at exactly one index if and only if they are adjacent, ensuring $H$ will be an induced subgraph. 
    \item The presence of some $j>1$ at index $i$ of $f(u_k)$ implies $f(u_{k'})$ has $j-1$ in spot $i$ for some $k'<k$, ensuring each index progresses sequentially through a valid ordered color palette. A map in which $v_i$ exhibits the $j$th color in its color palette without first exhibiting the $j-1$th color, cannot represent traversal in which the color palette is ordered correctly.
    \item The presence of $j>1$ in spot $i>1$ of $f(u_k)$ implies $f(u_{k'})$ has $2$ in spot $i-1$ for some $k'<k$, ensuring the order within the tuple by which indices change corresponds to the order of the tuple itself. A map in which $v_i$ changes from its initial color before $v_{i-1}$ cannot represent a valid traversal in which the vertices are ordered correctly.
\end{itemize}
The last two conditions are necessary because we seek only the $f$ that can arise from traversing some $H$ subgraph after having correctly ordered the vertices and their color palettes. Any $f$ that indicates an incorrectly ordered color palette or incorrectly ordered tuple of color changing vertices should be discarded.
The set $\mathcal{F}(H,\mathcal{O})$ of all such $f$ which exist for a given graph $H$ and respect an ordered traversal $\mathcal{O}$ of $H$ is independent of whichever $G$ we choose as our base graph. 

Consider a set of colorings $\phi_1,\dots,\phi_{|V(H)|}\in V(\mathcal{C}_k(G))$, and some tuple of vertices $\mathbf{x}=(v_1,\dots,v_d)\in (V(G))_d$, where $\mathbf{x}$ contains exactly the vertices that change color across the $\phi_i$. Let $\mathcal{P}_i=(c_{i,1},\dots,c_{i,r_i})$ denote the color palette of $v_i$, ordered according to the first coloring at which $v_i$ exhibits a given color. If a given valid state map $f$ of $H$ containing tuples of length $d$ accurately describes the index $j_i$ of the color of each $v_i$ within its color palette at each $\phi_k$, the colorings $\phi_1,\dots,\phi_{|V(H)|}\in V(\mathcal{C}_k(G))$ must form an induced $H\subseteq \mathcal{C}_k(G)$. This is a direct consequence of the condition that two vertices in $H$ have tuples differing at exactly one index if and only if they are adjacent. Whenever two tuples $(j_1,\dots,j_i,\dots,j_d)$ and $(j_1,\dots,j_i',\dots,j_d)$ differ at exactly one index, this indicates that the corresponding colorings $\phi,\phi'$ color $G$ such that $\phi(v_i)=c_{i,j_i}$ and $\phi'(v_i)=c_{i,j_i'}$, but all other vertices have matching colors across $\phi,\phi'$. This is precisely the definition of adjacent colorings of $G$.

Given a base graph $G$ and a valid state map $f\in\mathcal{F}(H,\mathcal{O})$ where $f\colon V(H)\to\mathbb{N}^{d}$, we construct a shadow graph $G_{f,\mathbf{x}}$ from $G$ on an ordered $d$-tuple $\mathbf{x}=(v_1, \dots, v_d)\in (V(G))_d$ as follows: 
\begin{itemize}
    \item For each $1\leq i\leq d$, replace each $v_i$ with a clique of size $r_i$ containing shadow vertices $v_{i,j} : 1 \leq j \leq r_i$ with $r_i$ being the maximal color index taken by $v_i$ across all states in $f$.
    \item Add edges between each clique and $N(v_i)\setminus \mathbf{x}$.
    \item For each pair of color palette indices in each state in $f(V(H))$, say $j_1$ in spot $i_1$ and $j_2$ in spot $i_2$, add an edge $v_{i_1,j_1}v_{i_2,j_2}$ if $v_{i_1}$ and $v_{i_2}$ are adjacent in $G$.
\end{itemize}
The edges between shadow cliques are added to ensure that the colorings dictated by states in $f$ actually result in proper colorings once the color palettes are chosen. For example, if some state $(2,5,\dots)$ is in the image of $f$, this dictates that for some $\phi$ in the resulting $H$-subgraph, we will have $\phi(v_1)=c_{1,3}$ and $\phi(v_2)=c_{2,5}$. If $v_1$ and $v_2$ are adjacent in $G$ we should ensure that $c_{1,3}\ne c_{2,5}$ by an edge between $v_{1,3}$ and $v_{2,5}$.

Given a coloring $c$ of $G_{f,\mathbf{x}}$, we can construct $|V(H)|$ colorings $\phi_1, \dots, \phi_{|V(H)|}$ of $G$ according to $f$. Let $\phi_k(w)=c(w)$ for all $w\notin \mathbf{x}$, and let $\phi_k(v_i)=c(v_{i,j_i})$ where $f(u_k)=(j_1,\dots,j_d)$. That is, the equality $f(u_k)=(j_1,\dots,j_d)$ dictates that vertex $v_i$ should take the color of the $j_i$th vertex in its shadow clique. Edges where one endpoint is not in $\mathbf{x}$ are proper in $G$ as they inherit their properness from their corresponding edge in $G_{f,\mathbf{x}}$. For edges $v_{i}v_{i'}\in E(G)$ where $v_{i},v_{i'}\in \mathbf{x}$, we know that in a coloring $\phi$ corresponding to some state $(j_1,\dots,j_d)$ in the image of $f$, we have added the edge $v_{i,j_i}v_{i',j_{i'}}$ to $E(G_{f,\mathbf{x}})$. Therefore these edges will not violate the properness of $\phi$. Conversely, given an induced $H\subseteq \mathcal{C}_k(G)$, and a traversal of $H$ according to $\mathcal{O}$, we can determine $f$ and $\mathbf{x}$ unambiguously by ordering the color changing vertices and their respective color palettes. We can then color each shadow clique in $G_{f,\mathbf{x}}$ of each vertex in $\mathbf{x}$ with its respective ordered color palette, and the properness of this coloring is ensured by the properness of all colorings in $V(H)$. To calculate $\pi_G^{(H)}$, we sum over $\pi_{G_{f,\mathbf{x}}}(k)$ for all $f\in\mathcal{F}(H,\mathcal{O})$, for all $\mathbf{x}\in (V(G))_d$ and divide by $|\operatorname{Aut}(H)|$. 

The final division by $|\operatorname{Aut}(H)|$ is necessary because each coloring counted by each $\pi_{G_{f,\mathbf{x}}}(k)$ corresponds to an ordering of some set of $|V(H)|$ colorings in $\mathcal{C}_k(G)$ satisfying a specific requirement on which pairs of colorings must have an edge between them, and which must not. If one such set of colorings can be relabeled in multiple ways to satisfy the same adjacency requirements, these different orderings will independently contribute to the sum. This may correspond to some permutation of the labels of the $v_i$ or the $c_{i,j}$. We can partition the set of all such ordered tuples of colorings in $\mathcal{C}_k(G)$ by which induced $H$-subgraph they span. Each partition will have exactly $|\operatorname{Aut}(H)|$ such orderings.

\subsection{The Construction for Disconnected Graphs}

Counting induced disconnected subgraphs $H\subseteq \mathcal{C}_k(G)$ can be done by multiplying the number of induced subgraphs isomorphic to each connected component, and subtracting out all possible ways of embedding the connected components in a way that is not induced. This involves counting all graphs containing all connected components of $H$, but which are not $H$, and do not have extra vertices. Formally, this can be defined as follows.

We define the notion of a graph containing two connected components without extra vertices. Given two graphs $A$ and $B$, let
$$
J(A, B) = \{\text{graphs } G  \ | \ 
 V(G)=V_A\cup V_B \text{ with } G[V_A]\cong A \text{ and } G[V_B]\cong B \}.
$$

For example when $A=K_3$ and $B=K_2$, the collection $J(A,B)$ consists of the following graphs:

\begin{tikzpicture}
    \node[red_vertex] (v1) at (0.55, 1) {};
    \node[purple_vertex] (v2) at (1.1, 0) {};
    \node[purple_vertex] (v3) at (0, 0) {};
    \draw[red_edge] (v2) -- (v1) -- (v3);
    \draw[purple_edge] (v2) -- (v3);
\end{tikzpicture},
\begin{tikzpicture}
    \node[red_vertex] (v1) at (0, 0) {};
    \node[purple_vertex] (v2) at (1, 0) {};
    \node[red_vertex] (v3) at (0, 1) {};
    \node[blue_vertex] (v4) at (1, 1) {};
    \draw[red_edge] (v1) -- (v2) -- (v3) -- (v1);
    \draw[blue_edge] (v2) -- (v4);
\end{tikzpicture},
\begin{tikzpicture}
    \node[red_vertex] (v1) at (0, 0) {};
    \node[purple_vertex] (v2) at (1, 0) {};
    \node[red_vertex] (v3) at (0, 1) {};
    \node[blue_vertex] (v4) at (1, 1) {};
    \draw[red_edge] (v1) -- (v2) -- (v3) -- (v1);
    \draw[blue_edge] (v2) -- (v4);
    \draw[edge] (v3) -- (v4);
\end{tikzpicture},
\begin{tikzpicture}
    \node[red_vertex] (v1) at (0, 0) {};
    \node[purple_vertex] (v2) at (1, 0) {};
    \node[red_vertex] (v3) at (0, 1) {};
    \node[blue_vertex] (v4) at (1, 1) {};
    \draw[blue_edge] (v2) -- (v4);
    \draw[edge] (v3) -- (v4);
    \draw[edge] (v1) -- (v4);
    \draw[red_edge] (v1) -- (v2) -- (v3) -- (v1);
\end{tikzpicture},
\begin{tikzpicture}
    \node[red_vertex] (v1) at (0, 0.55) {};
    \node[red_vertex] (v2) at (0.53, 0.17) {};
    \node[blue_vertex] (v3) at (0.32, -0.45) {};
    \node[blue_vertex] (v4) at (-0.32, -0.45) {};
    \node[red_vertex] (v5) at (-0.53, 0.17) {};
    \draw[red_edge] (v1) -- (v2) -- (v5) -- (v1);
    \draw[blue_edge] (v3) -- (v4);
\end{tikzpicture},
\begin{tikzpicture}
    \node[red_vertex] (v1) at (0, 0.55) {};
    \node[red_vertex] (v2) at (0.53, 0.17) {};
    \node[blue_vertex] (v3) at (0.32, -0.45) {};
    \node[blue_vertex] (v4) at (-0.32, -0.45) {};
    \node[red_vertex] (v5) at (-0.53, 0.17) {};
    \draw[red_edge] (v1) -- (v2) -- (v5) -- (v1);
    \draw[blue_edge] (v3) -- (v4);
    \draw[edge] (v4) -- (v5);
\end{tikzpicture},
\begin{tikzpicture}
    \node[red_vertex] (v1) at (0, 0.55) {};
    \node[red_vertex] (v2) at (0.53, 0.17) {};
    \node[blue_vertex] (v3) at (0.32, -0.45) {};
    \node[blue_vertex] (v4) at (-0.32, -0.45) {};
    \node[red_vertex] (v5) at (-0.53, 0.17) {};
    \draw[red_edge] (v1) -- (v2) -- (v5) -- (v1);
    \draw[blue_edge] (v3) -- (v4);
    \draw[edge] (v2) -- (v4) -- (v5);
\end{tikzpicture},
\begin{tikzpicture}
    \node[red_vertex] (v1) at (0, 0.55) {};
    \node[red_vertex] (v2) at (0.53, 0.17) {};
    \node[blue_vertex] (v3) at (0.32, -0.45) {};
    \node[blue_vertex] (v4) at (-0.32, -0.45) {};
    \node[red_vertex] (v5) at (-0.53, 0.17) {};
    \draw[red_edge] (v1) -- (v2) -- (v5) -- (v1);
    \draw[blue_edge] (v3) -- (v4);
    \draw[edge] (v2) -- (v4) -- (v5);
    \draw[edge] (v1) -- (v4);
\end{tikzpicture},
\begin{tikzpicture}
    \node[red_vertex] (v1) at (0, 0.55) {};
    \node[red_vertex] (v2) at (0.53, 0.17) {};
    \node[blue_vertex] (v3) at (0.32, -0.45) {};
    \node[blue_vertex] (v4) at (-0.32, -0.45) {};
    \node[red_vertex] (v5) at (-0.53, 0.17) {};
    \draw[red_edge] (v1) -- (v2) -- (v5) -- (v1);
    \draw[blue_edge] (v3) -- (v4);
    \draw[edge] (v4) -- (v5) -- (v3);
\end{tikzpicture},
\begin{tikzpicture}
    \node[red_vertex] (v1) at (0, 0.55) {};
    \node[red_vertex] (v2) at (0.53, 0.17) {};
    \node[blue_vertex] (v3) at (0.32, -0.45) {};
    \node[blue_vertex] (v4) at (-0.32, -0.45) {};
    \node[red_vertex] (v5) at (-0.53, 0.17) {};
    \draw[red_edge] (v1) -- (v2) -- (v5) -- (v1);
    \draw[blue_edge] (v3) -- (v4);
    \draw[edge] (v4) -- (v5);
    \draw[edge] (v3) -- (v2);
\end{tikzpicture},
\begin{tikzpicture}
    \node[red_vertex] (v1) at (0, 0.55) {};
    \node[red_vertex] (v2) at (0.53, 0.17) {};
    \node[blue_vertex] (v3) at (0.32, -0.45) {};
    \node[blue_vertex] (v4) at (-0.32, -0.45) {};
    \node[red_vertex] (v5) at (-0.53, 0.17) {};
    \draw[red_edge] (v1) -- (v2) -- (v5) -- (v1);
    \draw[blue_edge] (v3) -- (v4);
    \draw[edge] (v4) -- (v5) -- (v3) -- (v2);
\end{tikzpicture},
\begin{tikzpicture}
    \node[red_vertex] (v1) at (0, 0.55) {};
    \node[red_vertex] (v2) at (0.53, 0.17) {};
    \node[blue_vertex] (v3) at (0.32, -0.45) {};
    \node[blue_vertex] (v4) at (-0.32, -0.45) {};
    \node[red_vertex] (v5) at (-0.53, 0.17) {};
    \draw[red_edge] (v1) -- (v2) -- (v5) -- (v1);
    \draw[blue_edge] (v3) -- (v4);
    \draw[edge] (v4) -- (v5) -- (v3) -- (v2);
    \draw[edge] (v4) -- (v2);
\end{tikzpicture},
\begin{tikzpicture}
    \node[red_vertex] (v1) at (0, 0.55) {};
    \node[red_vertex] (v2) at (0.53, 0.17) {};
    \node[blue_vertex] (v3) at (0.32, -0.45) {};
    \node[blue_vertex] (v4) at (-0.32, -0.45) {};
    \node[red_vertex] (v5) at (-0.53, 0.17) {};
    \draw[blue_edge] (v3) -- (v4);
    \draw[edge] (v4) -- (v5) -- (v3) -- (v2);
    \draw[edge] (v2) -- (v4) -- (v1);
    \draw[red_edge] (v1) -- (v2) -- (v5) -- (v1);
\end{tikzpicture},
\begin{tikzpicture}
    \node[red_vertex] (v1) at (0, 0.55) {};
    \node[red_vertex] (v2) at (0.53, 0.17) {};
    \node[blue_vertex] (v3) at (0.32, -0.45) {};
    \node[blue_vertex] (v4) at (-0.32, -0.45) {};
    \node[red_vertex] (v5) at (-0.53, 0.17) {};
    \draw[blue_edge] (v3) -- (v4);
    \draw[edge] (v4) -- (v5) -- (v3) -- (v2);
    \draw[edge] (v2) -- (v4) -- (v1) -- (v3);
    \draw[red_edge] (v1) -- (v2) -- (v5) -- (v1);
\end{tikzpicture}
.

Given a graphs $G$, $A$, and $B$, define $f_G(A,B)$ to count the number of embeddings of $A,B$ into $G$ without leftover vertices as follows:
$$
f_G(A, B) = \# \{(V_A, V_B) \ | \  V(G)=V_A\cup V_B \text{ with } G[V_A]\cong A \text{ and } G[V_B]\cong B \}.
$$
Note that $V_A,V_B$ may overlap. For example $f_{\text{Diamond}}(K_2,K_3)=4$, counting the following 4 embeddings:
\begin{center}
    \begin{tikzpicture}
    \node[red_vertex] (v1) at (0, 0) {};
    \node[purple_vertex] (v2) at (1, 0) {};
    \node[red_vertex] (v3) at (0, 1) {};
    \node[blue_vertex] (v4) at (1, 1) {};
    \draw[red_edge] (v1) -- (v2) -- (v3) -- (v1);
    \draw[blue_edge] (v2) -- (v4);
    \draw[edge] (v3) -- (v4);
    \end{tikzpicture},
    \begin{tikzpicture}
    \node[red_vertex] (v1) at (0, 0) {};
    \node[red_vertex] (v2) at (1, 0) {};
    \node[purple_vertex] (v3) at (0, 1) {};
    \node[blue_vertex] (v4) at (1, 1) {};
    \draw[red_edge] (v1) -- (v2) -- (v3) -- (v1);
    \draw[edge] (v2) -- (v4);
    \draw[blue_edge] (v3) -- (v4);
    \end{tikzpicture},
    \begin{tikzpicture}
    \node[blue_vertex] (v1) at (0, 0) {};
    \node[purple_vertex] (v2) at (1, 0) {};
    \node[red_vertex] (v3) at (0, 1) {};
    \node[red_vertex] (v4) at (1, 1) {};
    \draw[red_edge] (v4) -- (v2) -- (v3) -- (v4);
    \draw[blue_edge] (v2) -- (v1);
    \draw[edge] (v3) -- (v1);
    \end{tikzpicture},
    \begin{tikzpicture}
    \node[blue_vertex] (v1) at (0, 0) {};
    \node[red_vertex] (v2) at (1, 0) {};
    \node[purple_vertex] (v3) at (0, 1) {};
    \node[red_vertex] (v4) at (1, 1) {};
    \draw[red_edge] (v4) -- (v2) -- (v3) -- (v4);
    \draw[edge] (v2) -- (v1);
    \draw[blue_edge] (v3) -- (v1);
    \end{tikzpicture}
    .\end{center}

For a disconnected graph $H$, we calculate $\pi^{(H)}_G(k)$ by recursion on the connected components. Suppose we select one such component, $H'$. Given $H'$ and the complement $H\setminus H'$ (which may still be disconnected), 
then $$\pi^{(H)}_G(k)=\frac{1}{n_{H'}(H)}\left[\pi^{(H')}_G(k)\cdot \pi^{(H\setminus H')}_G(k)-\sum_{U\in J(H', H\setminus H')\setminus \{H\}} f_U(H', H\setminus H') \cdot \pi^{(U)}_{G}(k)\right]$$
Where $n_{H'}(H)$ counts the number of connected components of $H$ isomorphic to $H'$. 

\begin{proof}
    The product $\pi^{(H')}_G(k)\cdot \pi^{(H\setminus H')}_G(k)$ counts all pairs of induced subgraphs $(H'_i,(H\setminus H')_j)$ where $H'_i,(H\setminus H')_j\subseteq\mathcal{C}_k(G)$. Some of these pairs do not constitute an induced subgraph isomorphic to $H$, as they may be overlapping or have extra edges in $\mathcal{C}_k(G)$ connecting them. If this is the case, the true induced graph on their vertex set must be some other graph that contains both $H'$ and $H\setminus H'$ as induced subgraphs and has no additional vertices. This is precisely the definition of $J(H',H\setminus H')$. For each such graph, we should subtract out how many invalid pairs of $(H'_i,(H\setminus H')_j)$ in $\mathcal{C}_k(G)$ actually induce this graph. This is the definition of $f_U(H', H\setminus H')$. By subtracting all such invalid pairs, we are left with only the pairs that give an induced subgraph isomorphic to $H$. However, since $H$ contains $n_{H'}(H)$ connected components isomorphic to $H'$, then for any induced subgraph of $\mathcal{C}_k(G)$ isomorphic to $H$, there will be $n_{H'}(H)$ pairs of $(H'_i,(H\setminus H')_j)$ which are valid. These can be identified by selecting each $H'$-component of the $H$-subgraph as the prior element of the pair, and the remainder of the $H$-subgraph as the latter. To account for this we divide by $n_{H'}(H)$.
\end{proof}

\section{Hypercube Polynomials of Trees}\label{sec:hypercube}

In this section we will demonstrate how the shadow graphs established in the previous section can be used to give a closed form for polynomials counting induced hypercubes in trees, a task which did not seem to be attainable using previous methods. As hypercubes are a special, highly symmetric case of a Cartesian product of cliques, we can use the formula from Subsection~\ref{subsec:cartesian}. For simplicity, because we are calculating the special case of a Cartesian product of cliques where all cliques are the same size, we can simply sum over unordered sets of $d$ vertices, and omit the factor of $\frac{1}{d!}$. It is known that the chromatic polynomial of a tree depends only on the number of vertices in the tree, and that the chromatic pairs polynomial depends only on the degree sequence. Therefore it is natural to wonder what data is necessary to construct higher dimensional hypercube polynomials. We find a simple expression for the square-counting polynomial and the general $d$-dimensional hypercube polynomial. 

\subsection{Chromatic and Chromatic Pairs Polynomial of a Tree}

For a tree $T$ with $n$ vertices, the chromatic polynomial is well-known to be $\pi_T(k) = k(k - 1)^{n - 1}$. This reflects that after choosing a color for a root vertex, each subsequent vertex has $k - 1$ choices to avoid matching its parent. The chromatic polynomial is the same as $\pi_{T}^{(Q_0)}(k)$. To calculate the chromatic pairs polynomial $\pi_{T}^{(Q_1)}(k)=\pi_{T}^{(P_2)}(k)$, the following formula appears in \cite{AKLR25}*{Equation (4.4)}. We give an alternative derivation using shadow graphs.

\begin{prop}
For a tree $T$, the chromatic pairs polynomial is
$$
\pi_T^{(Q_1)}(k) = \frac{1}{2} \sum_{v \in V} k(k - 1)^{n - \deg(v)} (k - 2)^{\deg(v)}.
$$ 
\end{prop}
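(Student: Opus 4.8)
The plan is to apply Lemma~\ref{lem:pairs}, which states that $\pi_T^{(P_2)}(k) = \frac{1}{2}\sum_{v\in V}\pi_{T_v}(k)$, and then to compute $\pi_{T_v}(k)$ explicitly for each vertex $v$ of the tree $T$. The key observation is that the $P_2$-shadow graph $T_v$ is obtained from $T$ by deleting $v$ and replacing it with two vertices $v_1,v_2$, joined by an edge, each adjacent to every vertex in $N(v)$. So I would first analyze the structure of $T_v$: deleting $v$ from a tree $T$ on $n$ vertices splits it into $\deg(v)$ subtrees $T^{(1)},\dots,T^{(\deg v)}$, one hanging off each neighbor $w_1,\dots,w_{\deg v}$ of $v$, with $\sum_i |V(T^{(i)})| = n-1$. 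In $T_v$, the two new vertices $v_1,v_2$ form a $K_2$, and each $w_i$ is adjacent to both $v_1$ and $v_2$.

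The main computational step is a deletion/root-down counting argument for $\pi_{T_v}(k)$. First color the edge $v_1v_2$: there are $k(k-1)$ ordered choices of distinct colors for $(v_1,v_2)$. Now each neighbor $w_i$ of $v$ must avoid \emph{both} of these colors (since $w_i \sim v_1$ and $w_i \sim v_2$), leaving $k-2$ valid colors for each $w_i$; this contributes a factor $(k-2)^{\deg v}$. Finally, each remaining vertex of $T$ (those strictly inside the subtrees $T^{(i)}$, i.e. not equal to $v$ or any $w_i$) is colored by processing the forest of subtrees outward from the $w_i$: each such vertex has exactly one already-colored parent in its subtree, so it has $k-1$ choices. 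The number of such vertices is $(n-1) - \deg(v) = n - \deg(v) - 1$, giving a factor $(k-1)^{n-\deg(v)-1}$. Multiplying, $\pi_{T_v}(k) = k(k-1)\cdot(k-2)^{\deg v}\cdot(k-1)^{n-\deg(v)-1} = k(k-1)^{n-\deg(v)}(k-2)^{\deg v}$, and summing over $v$ and dividing by $2$ gives the claimed formula.

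The step I expect to require the most care is verifying that $T_v$ really does decompose cleanly enough that this greedy left-to-right coloring count is exact — that is, that at no point does a vertex see two or more already-colored neighbors with a complicated dependency. This works because $T$ is a tree: once $v$ is removed, the components are trees, and each is attached to exactly one vertex $w_i$ of $N(v)$, so the only vertices with two colored neighbors when we reach them are the $w_i$ themselves (seeing $v_1$ and $v_2$), and those two neighbors are forced distinct by the edge $v_1v_2$, which is exactly why the count is the clean product $k(k-1)(k-2)^{\deg v}$ rather than something requiring inclusion–exclusion. I would spell out this ordering argument (e.g.\ root each subtree $T^{(i)}$ at $w_i$ and color in BFS order) to make the factor $(k-1)^{n-\deg(v)-1}$ rigorous, and then note that the edge case $n-\deg(v)-1 = 0$ (a star centered at $v$) is consistent with the formula.
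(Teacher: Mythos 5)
Your proposal is correct and follows essentially the same route as the paper: it invokes Lemma~\ref{lem:pairs} and then evaluates $\pi_{T_v}(k)$ by a greedy coloring count ($k$ choices for $v_1$, $k-1$ for $v_2$, $k-2$ for each neighbor of $v$, and $k-1$ for each remaining vertex), yielding $k(k-1)^{n-\deg(v)}(k-2)^{\deg(v)}$. Your extra care in rooting each subtree at its attachment vertex $w_i$ and coloring in BFS order simply makes explicit the ordering argument the paper leaves implicit.
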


\begin{proof}
In the shadow graph $T_v$, where $v$ is replaced by $v_1$ and $v_2$, we have:
\begin{itemize}
    \item $k$ choices for $v_1$.
    \item $k - 1$ choices for $v_2$ (since $v_2$ must differ from $v_1$).
    \item $k - 2$ choices for all neighbors of $v_1$ (must differ from both $v_1$ and $v_2$).
    \item $k - 1$ choices for all other vertices (must differ from their parent).
\end{itemize}
An example of $T_v$ for one such $T,v$ is shown below, labeled according to the number of colors available:
\begin{figure}[ht]
    \centering
    \begin{tikzpicture}
        \node[vertex] (0) at (-1, -0.5) {};
        \node[vertex] (1) at (-2, -0.5) {};
        \node[vertex] (2) at (-3, -0.5) {};
        \node[vertex] (3) at (-4, -0.5) {};
        \node[vertex] (4) at (-3.5, 0.5) {};
        \node[red_vertex] (5) at (-1.5, 0.5) {};
        \node[] (21) at (-1.5, 0.9) {$v$};
        \node[vertex] (6) at (-2.5, 1) {};
        \node[vertex] (7) at (1, -0.5) {};
        \node[] (21) at (1, -0.9) {$k\!-\!1$};
        \node[vertex] (8) at (2, -0.5) {};
        \node[] (21) at (2, -0.9) {$k\!-\!1$};
        \node[vertex] (9) at (3, -0.5) {};
        \node[] (21) at (3, -0.9) {$k\!-\!2$};
        \node[vertex] (10) at (4, -0.5) {};
        \node[] (21) at (4, -0.9) {$k\!-\!2$};
        \node[red_vertex] (11) at (3, 0.5) {};
        \node[] (21) at (2.6, 0.5) {$k$};
        \node[red_vertex] (12) at (4, 0.5) {};
        \node[] (21) at (4.1, 0.95) {$k\!-\!1$};
        \node[vertex] (13) at (2.5, 1) {};
        \node[] (21) at (2.5, 1.4) {$k\!-\!2$};
        \node[vertex] (14) at (1.5, 0.5) {};
        \node[] (21) at (1.3, 0.95) {$k\!-\!1$};
        \draw[edge] (4) -- (2);
        \draw[edge] (4) -- (3);
        \draw[edge] (5) -- (0);
        \draw[edge] (5) -- (1);
        \draw[edge] (6) -- (4);
        \draw[edge] (6) -- (5);
        \draw[edge] (11) -- (9);
        \draw[edge] (11) -- (10);
        \draw[edge] (12) -- (9);
        \draw[edge] (12) -- (10);
        \draw[red_edge] (12) -- (11);
        \draw[edge] (13) -- (11);
        \draw[edge] (13) -- (12);
        \draw[edge] (14) -- (7);
        \draw[edge] (14) -- (8);
        \draw[edge] (14) -- (13);
		\node[] (40) at (-2.5, -1.4) {$T$};
		\node[] (41) at (2.5, -1.4) {$T_v$};
        \end{tikzpicture}
\end{figure}

Multiplying these choices gives
$$
\pi_{T_v}(k) = k (k - 1) (k - 2)^{\deg(v)} (k - 1)^{n - \deg(v) - 1}.
$$
Simplifying, we get $\pi_{T_v}(k) = k (k - 1)^{n - \deg(v)} (k - 2)^{\deg(v)}$. Summing over all $v \in V(T)$ and dividing by $2$ (as per Lemma~\ref{lem:pairs}), we obtain the desired formula.
\end{proof}

\subsection{Square Polynomial of a Tree}

When coloring a shadow graph of a tree with two shadow vertices (say $u_1,u_2$ and $v_1,v_2$ replacing base vertices $u$ and $v$), the construction depends on whether $uv\in E(T)$. Below is an example of each scenario for one such $T$, in one case with $u,v_1$ and another with $u,v_2$. Both are labeled according to the number of colors available as we traverse the shadow graph:
\begin{figure}[ht]
    \centering
    \begin{tikzpicture}
        \node[vertex] (0) at (0, 1) {};
        \node[] (40) at (0,1.4) {$k\!-\!2$};
        \node[red_vertex] (1) at (0.5, 0.5) {};
        \node[] (40) at (0.1, 0.5) {$k$};
        \node[red_vertex] (2) at (1.5, 0.5) {};
        \node[] (40) at (1.6, 0.95) {$k\!-\!1$};
        \node[vertex] (3) at (0.5, -0.5) {};
        \node[] (40) at (0.5,-0.9) {$k\!-\!2$};
        \node[blue_vertex] (4) at (1.5, -0.5) {};
        \node[] (40) at (1.5,-0.9) {$k\!-\!2$};
        \node[blue_vertex] (5) at (2.5, -0.5) {};
        \node[] (40) at (2.5,-0.9) {$k\!-\!3$};
        \node[vertex] (6) at (-1, 0.5) {};
        \node[] (40) at (-1.1,0.95) {$k\!-\!1$};
        \node[vertex] (7) at (-0.5, -0.5) {};
        \node[] (40) at (-0.5,-0.9) {$k\!-\!1$};
        \node[green_vertex] (8) at (-1.5, -0.5) {};
        \node[] (40) at (-1.5,-0.9) {$k\!-\!1$};
        \node[blue_vertex] (9) at (-3.5, -0.5) {};
        \node[] (40) at (-3.5,-0.9) {$v_1$};
        \node[vertex] (10) at (-4.5, -0.5) {};
        \node[vertex] (11) at (-5.5, -0.5) {};
        \node[green_vertex] (12) at (-6.5, -0.5) {};
        \node[] (40) at (-6.5,-0.9) {$v_2$};
        \node[vertex] (13) at (-6, 0.5) {};
        \node[red_vertex] (14) at (-4, 0.5) {};
        \node[] (40) at (-4,0.9) {$u$};
        \node[vertex] (15) at (-5, 1) {};
        \node[green_vertex] (17) at (4.5, -0.5) {};
        \node[] (40) at (4.5,-0.9) {$k\!-\!1$};
        \node[green_vertex] (18) at (5.5, -0.5) {};
        \node[] (40) at (5.5,-0.9) {$k\!-\!2$};
        \node[vertex] (19) at (6.5, -0.5) {};
        \node[] (40) at (6.5,-0.9) {$k\!-\!1$};
        \node[vertex] (20) at (7.5, -0.5) {};
        \node[] (40) at (7.5,-0.9) {$k\!-\!2$};
        \node[blue_vertex] (21) at (8.5, -0.5) {};
        \node[] (40) at (8.5,-0.9) {$k\!-\!2$};
        \node[vertex] (22) at (6, 0.5) {};
        \node[] (40) at (5.9,0.95) {$k\!-\!1$};
        \node[red_vertex] (23) at (7.5, 0.5) {};
        \node[] (40) at (7.1,0.5) {$k$};
        \node[red_vertex] (24) at (8.5, 0.5) {};
        \node[] (40) at (8.6,0.95) {$k\!-\!1$};
        \node[vertex] (25) at (7, 1) {};
        \node[] (40) at (7,1.4) {$k\!-\!2$};
        \draw[edge] (1) -- (0);
        \draw[edge] (2) -- (0);
        \draw[red_edge] (2) -- (1);
        \draw[edge] (3) -- (1);
        \draw[edge] (3) -- (2);
        \draw[purple_edge] (4) -- (1);
        \draw[purple_edge] (4) -- (2);
        \draw[purple_edge] (5) -- (1);
        \draw[purple_edge] (5) -- (2);
        \draw[blue_edge] (5) -- (4);
        \draw[edge] (6) -- (0);
        \draw[edge] (7) -- (6);
        \draw[edge] (8) -- (6);
        \draw[edge] (13) -- (11);
        \draw[edge] (13) -- (12);
        \draw[purple_edge] (14) -- (9);
        \draw[edge] (14) -- (10);
        \draw[edge] (15) -- (13);
        \draw[edge] (15) -- (14);
        \draw[green_edge] (18) -- (17);
        \draw[edge] (22) -- (17);
        \draw[edge] (22) -- (18);
        \draw[edge] (22) -- (19);
        \draw[edge] (23) -- (20);
        \draw[purple_edge] (23) -- (21);
        \draw[edge] (24) -- (20);
        \draw[purple_edge] (24) -- (21);
        \draw[red_edge] (24) -- (23);
        \draw[edge] (25) -- (22);
        \draw[edge] (25) -- (23);
        \draw[edge] (25) -- (24);
		\node[] (40) at (-5, -1.4) {$T$};
		\node[] (41) at (0.5, -1.4) {$T_{u,v_1}$};
		\node[] (41) at (6.5, -1.4) {$T_{u,v_2}$};
    \end{tikzpicture}
\end{figure}
\smallskip 

\noindent\textit{Case 1: $u$ and $v$ are adjacent.}

If $u$ and $v$ are adjacent in $T$, we show that the contribution to the $2$-dimensional hypercube polynomial is
$$
k (k - 1)^{n - \deg(u) - \deg(v) + 1} (k - 2)^{\deg(u) + \deg(v) - 1} (k - 3).
$$
We can color the vertices of the shadow graph in the following order:
\begin{itemize}
    \item $k$ choices for $u_1$.
    \item $k - 1$ choices for $u_2$, as it must differ from $u_1$.
    \item $k - 2$ choices for $v_1$, as it must differ from $u_1$ and $u_2$.
    \item $k - 3$ choices for $v_2$, as it must differ from $u_1$, $u_2$, and $v_1$.
    \item All neighbors of $u_1$ and $v_1$ (excluding each other) have $k - 2$ choices as neighbors of $u_1$ must differ from $u_1$ and $u_2$, and likewise for neighbors of $v_1$. This gives a factor of $(k-2)^{\deg(u)+\deg(v)-2}$.
    \item Remaining vertices have can be assigned colors by traversing the remainder of the tree, with $k - 1$ choices each. This gives a factor of $(k-1)^{n-\deg(u)-\deg(v)}$.
\end{itemize}
Multiplying these choices gives the formula.

\smallskip 

\noindent\textit{Case 2: $u$ and $v$ are \textbf{not} adjacent.}

If $u$ and $v$ are not adjacent in $T$, we show the corresponding contribution is
$$
k (k - 1)^{n - \deg(u) - \deg(v) + 1} (k - 2)^{\deg(u) + \deg(v)}.
$$
The coloring choices are:
\begin{itemize}
    \item $k$ choices for $u_1$.
    \item $k - 1$ choices for $u_2$ as it must differ from $u_1$.
    \item $k - 2$ choices for all neighbors of $u_1$ as they must differ from $u_1$ and $u_2$.
    \item $k - 1$ choices for all nodes on the path between $u_1$ and $v_1$ excluding $u_1$, and the neighbor of $u_1$, but including $v_1$, as each of these can be assigned a color differing only from their one neighbor which has already been assigned a color.
    \item $k - 2$ choices for $v_2$ as it must differ from $v_1$ and the neighbor of $v_1$ on the path between $u_1$ and $v_1$.
    \item $k - 2$ choices for all other neighbors of $v_1$, as they must differ from $v_1$ and $v_2$.
    \item $k - 1$ choices for all other vertices, as they can be assigned colors according to an ordered traversal, in which they only need differ from their neighbor which has already been assigned a color.
\end{itemize}
The length of the path between $u$ and $v$ does not affect the total count, as each vertex on the path has an identical number of choices to all other vertices of distance at least 2 to $u$ and $v$. Multiplying the terms yields the formula.

Our analysis above leads to the following result:
\begin{cor}
\begin{align*}
    \pi^{(Q_2)}_T(k)&= \frac{1}{4}\sum_{uv\in E(T)} k (k - 1)^{n - \deg(u) - \deg(v) + 1} (k - 2)^{\deg(u) + \deg(v) - 1} (k - 3) \\ 
    & + \frac{1}{4}\sum_{uv\notin E(T)}k (k - 1)^{n - \deg(u) - \deg(v) + 1} (k - 2)^{\deg(u) + \deg(v)}.
\end{align*}
\end{cor}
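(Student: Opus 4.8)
The plan is to assemble the corollary directly from the hypercube-counting specialization of the Cartesian-product-of-cliques lemma together with the two case computations just carried out. Since $Q_2 \cong K_2 \mathop{\square} K_2$ is the Cartesian product of two cliques of equal size $r_1=r_2=2$, the general formula from Subsection~\ref{subsec:cartesian} applies. As noted at the start of this section, because all the clique sizes coincide we may sum over \emph{unordered} pairs of distinct vertices rather than ordered pairs, which absorbs the $\tfrac{1}{d!}=\tfrac12$ overcounting factor; what remains from the normalizing constant $\tfrac{1}{\prod r_i!\cdot \prod m_j!}$ is $\tfrac{1}{2!}\cdot\tfrac{1}{2!}=\tfrac14$ (one $2!$ for permuting the two shadow colors of $u$, one for those of $v$). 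So $\pi^{(Q_2)}_T(k) = \tfrac14 \sum_{\{u,v\}} \pi_{T_{u,v}}(k)$, where the sum ranges over all $\binom{n}{2}$ unordered pairs of distinct vertices.

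Next I would split that sum according to whether $uv\in E(T)$ or not, which is exactly the Case~1 / Case~2 dichotomy. For the adjacent case, I invoke the computation above showing $\pi_{T_{u,v}}(k) = k(k-1)^{n-\deg(u)-\deg(v)+1}(k-2)^{\deg(u)+\deg(v)-1}(k-3)$, obtained by coloring $u_1, u_2, v_1, v_2$ in sequence (contributing $k(k-1)(k-2)(k-3)$), then the $\deg(u)+\deg(v)-2$ shared-neighbor-free neighbors of $u_1$ and $v_1$ (contributing $(k-2)^{\deg(u)+\deg(v)-2}$), then the remaining $n-\deg(u)-\deg(v)$ vertices via a tree traversal (contributing $(k-1)^{n-\deg(u)-\deg(v)}$); collecting exponents gives the stated product. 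For the non-adjacent case, I similarly invoke the second computation, where the key observation is that every vertex strictly between $u$ and $v$ on the unique $u$–$v$ path has the same single-constraint count $k-1$ as a generic vertex, so the path length is irrelevant, yielding $\pi_{T_{u,v}}(k) = k(k-1)^{n-\deg(u)-\deg(v)+1}(k-2)^{\deg(u)+\deg(v)}$.

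Substituting these two closed forms into the split sum gives precisely the two-term expression in the corollary statement, completing the proof. The only genuine subtlety — and the step I would be most careful about — is bookkeeping the exponents in Case~1: one must correctly account for the fact that $v_1$ is itself a neighbor of $u_1$ (so it should not be double-counted among the "$k-2$ neighbors"), hence the $-1$ rather than $-2$ in the exponent of $(k-2)$ and the corresponding shift in the exponent of $(k-1)$; and one must check that in a tree $u_1$ and $v_1$ share no common neighbor (true since a common neighbor plus the edge $uv$ would create a triangle), so the neighbor contributions genuinely multiply as $(k-2)^{\deg(u)-1}\cdot(k-2)^{\deg(v)-1}$. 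Everything else is a routine multiplication of independent color-choice counts justified by the tree structure, so no further obstacle is anticipated.
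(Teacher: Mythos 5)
Your proposal is correct and follows essentially the same route as the paper: specialize the Cartesian-product-of-cliques lemma to $Q_2$ with the $\tfrac14$ normalization over unordered pairs, split the sum by adjacency, and plug in the two case computations for $\pi_{T_{u,v}}(k)$ (adjacent and non-adjacent), with the same exponent bookkeeping and the same triangle-free observation ruling out common neighbors in the adjacent case.
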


\subsection{General Case for \texorpdfstring{$d$}{d} Shadow Vertices}

In the general case, when coloring the tree with $d$ shadow vertices, we sum over all unordered sets $S \subseteq V$ with $|S| = d$.

\begin{theorem}\label{thm:hypercube-trees}
The $d$-dimensional hypercube polynomial for a tree $T$ is
$$
\pi_T^{(Q_d)}(k) =\frac{1}{2^d} \sum_{S \subseteq V(T), |S| = d} k (k - 1)^{n + d - 1 - \deg(S)} (k - 2)^{\deg(S) - \Int(S)} (k - 3)^{\Int(S)},
$$
where $\deg(S) = \sum_{v \in S} \deg(v)$ and $\Int(S)$ is the number of interior edges, those with two endpoints in $S$.
\end{theorem}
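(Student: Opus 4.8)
The plan is to combine the Cartesian-product-of-cliques formula of Subsection~\ref{subsec:cartesian} with an explicit evaluation of the chromatic polynomials of the shadow graphs that arise. First I would specialize that lemma to $H = Q_d \cong \mathop{\square}_{i=1}^{d} K_2$, so that every $r_i$ equals $2$. There is then a single distinct clique size, occurring with multiplicity $d$, so the normalizing constant $\frac{1}{\prod_i r_i!\,\prod_j m_j!}$ equals $\frac{1}{2^{d}\,d!}$; and since permuting the coordinates of $\mathbf{x}$ produces isomorphic shadow graphs, summing over ordered $d$-tuples and dividing by $d!$ is the same as summing over unordered $d$-subsets $S\subseteq V(T)$, as already noted at the start of this section. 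This reduces the theorem to the single identity
$$\pi_{T_S}(k)=k\,(k-1)^{\,n+d-1-\deg(S)}\,(k-2)^{\,\deg(S)-\Int(S)}\,(k-3)^{\,\Int(S)}$$
for every $S\subseteq V(T)$ with $|S|=d$, where $T_S$ denotes the shadow graph obtained by replacing each $v\in S$ with a pair of vertices $v_1,v_2$.

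Next I would record the structure of $T_S$: each $w\notin S$ keeps its adjacencies, with every $v\in N_T(w)\cap S$ replaced by $v_1$ and $v_2$; each $v\in S$ contributes two vertices $v_1,v_2$ joined by an edge, both adjacent to the images of $N_T(v)$; and if $u,v\in S$ with $uv\in E(T)$, then $\{u_1,u_2,v_1,v_2\}$ induces a $K_4$ in $T_S$. The key observation is that $T_S$ admits a vertex ordering obtained from a depth-first search of $T$ rooted at an arbitrary vertex $\rho$ (colouring $v_1$ then $v_2$ whenever a vertex $v\in S$ is reached) in which the already-coloured neighbours of each newly processed vertex always form a clique. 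Because of this, the greedy count is exact: $\pi_{T_S}(k)$ is the product, over this order, of (number of colours $k$) minus (number of already-coloured neighbours).

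I would then tally these factors case by case according to whether a vertex and its parent lie in $S$. The root contributes a factor $k$, together with an extra factor $k-1$ when $\rho\in S$. An edge of $T$ directed from parent to child contributes: $k-1$ if neither endpoint is in $S$; one $k-2$ if the parent is in $S$ and the child is not; one $k-1$ (for $v_1$) and one $k-2$ (for $v_2$) if the child is in $S$ and the parent is not; and one $k-2$ (for the child's $v_1$) and one $k-3$ (for the child's $v_2$) if both endpoints are in $S$. Summing exponents: the exponent of $k-3$ is the number of interior edges, namely $\Int(S)$; the exponent of $k-2$ is the number of edges of $T$ with at least one endpoint in $S$, which equals $\deg(S)-\Int(S)$ by a standard incidence count (each edge with exactly one endpoint in $S$ is counted once in $\deg(S)$, each interior edge twice); and the exponent of $k-1$ is the number of edges whose parent endpoint is not in $S$, plus one when $\rho\in S$, which a short computation using $\#\{\text{children of }v\}=\deg(v)-1$ for $v\neq\rho$ shows equals $n+d-1-\deg(S)$ in either case. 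This gives the displayed identity for $\pi_{T_S}(k)$, and substituting it into the reduction from the first paragraph yields the theorem.

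The main difficulty is not any single calculation but making the edge-and-vertex bookkeeping of the last paragraph airtight: one must verify the clique property of the back-neighbourhoods at every step of the ordering (this is exactly what makes the greedy colouring exact), and check that the treatment of the root is consistent whether or not $\rho\in S$. As sanity checks I would confirm that $d=0$ and $d=1$ recover the known chromatic polynomial $k(k-1)^{n-1}$ and the chromatic pairs polynomial of a tree, and test the extreme case $S=V(T)$ (for instance $T=K_2$, where $T_S=K_4$) against a direct computation.
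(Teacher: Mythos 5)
Your proposal is correct and follows essentially the same route as the paper: specialize the Cartesian-product-of-cliques lemma to $r_1=\dots=r_d=2$ (absorbing the $d!$ by passing to unordered sets $S$), then evaluate $\pi_{T_S}(k)$ by coloring along a tree traversal and tallying the $k-1$, $k-2$, $k-3$ factors, whose exponents come out to $n+d-1-\deg(S)$, $\deg(S)-\Int(S)$, and $\Int(S)$ exactly as in the paper. Your DFS ordering with the clique-back-neighborhood (perfect elimination) justification is a slightly more explicit way of validating the greedy product that the paper carries out informally, but it is the same argument in substance.
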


\begin{proof}
For each set $S=\{v_1,v_2,\dots,v_d\}$ of vertices in $T$, we:
\begin{itemize}
    \item Replace each vertex $v_i$ in $S$ with two shadow vertices $v_{i,1}$ and $v_{i,2}$.
    \item Add edges $v_{i,1}v_{i,2}$ for all $i\le d$, as well as all edges in $\{v_{i,1},v_{i,2}\}\times N(v_i)$.
    \item Add edges $\{v_{i,1},v_{i,2}\}\times \{v_{j,1},v_{j,2}\}$ for all $v_iv_j\in E(T)$.
\end{itemize}

To color this graph, we begin by assigning any of the $k$ colors to $v_{1,1}$, and any of the $k-1$ remaining colors to $v_{1,2}$. Then, traversing the tree, we make the following coloring choices:
\begin{itemize}
    \item $k-2$ choices for any vertex not in $S$ adjacent to a shadow $K_2$ that has already been colored.
    \item $(k-1)(k-2)$ choices for both vertices in a shadow $K_2$ which is not adjacent to another shadow $K_2$ that has already been colored.
    \item $(k-2)(k-3)$ choices for both vertices in a shadow $K_2$ which is adjacent to another shadow $K_2$ that has already been colored.
    \item $k-1$ choices for a vertex not in $S$ which is not adjacent to a shadow $K_2$ that has already been colored.
\end{itemize}
In total, there will be a single factor of $k$. There will be one factor of $k-3$ for each shadow $K_2$, minus one for each connected component, as the first pair to be assigned a color in a connected component will have $(k-1)(k-2)$ choices. This is equivalent to the number of edges in $T$ with two endpoints in $S$. There will be a factor of $k-2$ for all vertices not in $S$ that are adjacent to a vertex in $S$, except for the first vertex to receive a color. There will be an additional factor of $k-2$ for all pairs of shadow vertices. This is equivalent to the number of edges in $T$ with at least one endpoint in $S$, which is $\deg(S)-\Int(S)$. The remaining vertices contribute a factor of $k-1$. Dividing by $2^d$ corrects for the over-counting due to permuting the colors in the color palette of each shadow vertex.
\end{proof}

The data necessary to construct the $d$th hypercube polynomial is then the multiset of all pairs $(\deg(S),\Int(S))$ for all subsets $S\subseteq V(T)$ where $|S|=d$. The number of such subsets, $\binom{|V(T)|}{d}$ increases until $d=\lfloor\frac{|V(T)|}{2}\rfloor$, at which point $\binom{|V(T)|}{d}=\binom{|V(T)|}{|V(T)|-d}$ for $d>\lfloor\frac{|V(T)|}{2}\rfloor$. In fact, for any subset $S$ with degree $\deg(S)$, the complement of $S$, $\overline{S}=V(T)\setminus S$ must have degree $\deg(\overline{S})=2\left(|V(T)|-1\right)-\deg(S)$, as $\deg(S)+\deg(\overline{S})=2|E(T)|$ by the handshake lemma, and $|E(T)|=|V(T)|-1$. Likewise, $\Int(\overline{S})=|V(T)|-1-\deg(S)-\Int(S)$, as $|E(T)|=\Int(S)+(\deg(S)-2\Int(S))+\Int(\overline{S})$. Therefore the $d$th multiset can be used to construct the $(|V(T)|-d)$th multiset.

\section{Relation to Generalized Degree Sequence}\label{sec:gen-deg-seq}
Having confirmed that $d$-dimensional hypercube polynomials do indeed require more data to calculate up until $d=\lfloor\frac{|V(T)|}{2}\rfloor$, it is natural to wonder whether the full set of hypercube polynomials form a complete invariant for trees. In this section, by relating this data to the generalized degree sequence introduced in \cite{Cre20}, we answer this question in the negative. 

The generalized degree sequence is defined as the multiset of triplets $(|S|, \Int(S), \Ext(S))$, for all subsets of $S\subseteq V(T)$, where $\Ext(S) = \deg(S) - 2 \Int(S)$ is the number of edges with exactly one endpoint in $S$. Since we can extract $\Int(S),\deg(S)$ from the generalized degree sequence, the generalized degree sequence is sufficient to determine the full set of hypercube polynomials of a tree by Theorem~\ref{thm:hypercube-trees}.

However, as shown in \cite{Cre20}, the generalized degree sequence is not a complete invariant of trees; non-isomorphic trees can share the same set of hypercube polynomials. The minimal example of this is the following pair of trees:

\begin{figure}[ht]
    \centering
    \begin{tikzpicture}
        \node[vertex] (S1) at (0,0) {};
        \node[vertex] (S2) at (1,0) {};
        \node[vertex] (S3) at (2,0) {};
        \node[vertex] (S3L) at (1.5,1) {};
        \node[vertex] (S3R) at (2.5,1) {};
        \node[vertex] (S4) at (3,0) {};
        \node[vertex] (S4U) at (3.5,1) {};
        \node[vertex] (S5) at (4,0) {};
        \node[vertex] (S6) at (5,0) {};
        \node[vertex] (S6U) at (5,1) {};
        \node[vertex] (S7) at (6,0) {};
        \draw[edge] (S1) -- (S2) -- (S3) -- (S4) -- (S5) -- (S6) -- (S7);
        \draw[edge] (S3R) -- (S3) -- (S3L);
        \draw[edge] (S4U) -- (S4);
        \draw[edge] (S6U) -- (S6);
        \node at (3, -0.5) {Tree $T_1$};

        \node[vertex] (T1) at (7,0) {};
        \node[vertex] (T2) at (8,0) {};
        \node[vertex] (T3) at (9,0) {};
        \node[vertex] (T3U) at (9,1) {};
        \node[vertex] (T4) at (10,0) {};
        \node[vertex] (T5) at (11,0) {};
        \node[vertex] (T5L) at (10.5,1) {};
        \node[vertex] (T5R) at (11.5,1) {};
        \node[vertex] (T6) at (12,0) {};
        \node[vertex] (T6U) at (12.5,1) {};
        \node[vertex] (T7) at (13,0) {};
        \draw[edge] (T1) -- (T2) -- (T3) -- (T4) -- (T5) -- (T6) -- (T7);
        \draw[edge] (T5R) -- (T5) -- (T5L);
        \draw[edge] (T3U) -- (T3);
        \draw[edge] (T6U) -- (T6);
        \node at (10, -0.5) {Tree $T_2$};
    \end{tikzpicture}
\end{figure}

Letting $M_i(T)$ denote the multiset of pairs $(\Int(X),\Ext(X))$ across all subsets $X\subseteq V(T)$ where $|X|=i$, we have the following implications between hypercube polynomials and sections of the generalized degree sequence. An arrow $A(T)\implies B(T)$ indicates that for two trees $T_1,T_2$ where $A(T_1)=A(T_2)$ it is always the case that $B(T_1)=B(T_2)$. An arrow $A(T)\overset{?}{\implies} B(T)$ indicates that $A(T)\implies B(T)$ has been confirmed to hold for trees $T$ where $|V(T)| \le 20$. In the diagram below, for all implications, we consider only when $i\le \lfloor\frac{|V(T)|}{2}\rfloor$, as we have established the amount of data in $M_i(T)$ decreases past that point:
$$
\begin{array}{*{9}{c}}
& & M_1(T) & \impliedby & M_2(T) & \overset{?}{\impliedby} & M_3(T) & \overset{?}{\impliedby} & M_4(T) \\
& &\Updownarrow && \Downarrow\;\Uparrow ? && \Downarrow&& \Downarrow\\
\pi_T(k) & \impliedby & \pi_T^{(Q_1)}(k) & \overset{?}{\impliedby} &  \pi_T^{(Q_2)}(k)& \overset{?}{\impliedby} & \pi_T^{(Q_3)}(k)& \centernot{\impliedby} & \pi_T^{(Q_4)}(k)
\end{array}
$$

As mentioned above, the vertical arrows $M_j(T)\implies \pi_T^{(Q_j)}(k)$ are justified by Theorem~\ref{thm:hypercube-trees}. When $j=1$, the reverse implication $\pi_T^{(Q_1)}(k)\implies M_1(T)$ amounts to the statement that degree sequence of a tree can be recovered from the chromatic pairs polynomial \cite{AKLR25}*{Theorem 5.1}. Since $\pi_T(k)=k(k-1)^{|V(T)|-1}$, the implication $\pi_T^{(Q_1)}(k)\implies \pi_T(k)$ is immediate. Next, we verify the only remaining implication from the diagram.

\begin{prop}
We have $M_2(T) \implies M_1(T)$.
\end{prop}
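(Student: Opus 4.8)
The plan is to show that, for a tree $T$, the entire degree sequence of $T$ is recoverable from $M_2(T)$; this suffices because $M_1(T)$ records exactly the degree sequence (for a single vertex $v$ one has $\Int(\{v\})=0$ and $\Ext(\{v\})=\deg(v)$). First I would extract the relevant data from $M_2(T)$: the number of edges of $T$ equals the number of two-element sets $X$ with $\Int(X)=1$, so $n:=|V(T)|$ is determined; and for a two-element set $X=\{u,v\}$ one has $\deg(u)+\deg(v)=\Ext(X)+2\Int(X)$, so $M_2(T)$ determines the multiset $P$ of all pairwise degree sums $\{\deg(u)+\deg(v):\{u,v\}\in\binom{V(T)}{2}\}$. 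Writing $n_i$ for the number of vertices of degree $i$, the multiplicity of a value $s$ in $P$ equals $\sum_{i<j,\ i+j=s}n_in_j$ plus $\binom{n_{s/2}}{2}$ when $s$ is even. It then remains to recover the numbers $n_i$ from $P$.

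One might hope that any size-$n$ integer multiset is determined by its multiset of pairwise sums, but this is false, and the failure is exactly the main obstacle: a short generating-function argument (if $f_1,f_2$ are the two counting polynomials then $(f_1-f_2)(x)\cdot(f_1+f_2)(x)=(f_1-f_2)(x^2)$, and cancelling the largest power of $(x-1)$ forces $2n$ to be a power of $2$) shows the ambiguity can occur precisely when $n$ is a power of $2$. So the proof must invoke the tree hypothesis. The key structural input is that a tree on $n\ge 2$ vertices has at least two leaves, hence $n_1\ge 2$ and the minimum degree equals $1$. Using this, I would reconstruct the $n_i$ greedily: the value $2$ appears in $P$ exactly $\binom{n_1}{2}$ times, which recovers $n_1$; and for each $s\ge 3$, assuming $n_1,\dots,n_{s-2}$ are already known, every term in the multiplicity of $s$ in $P$ other than the single term $n_1 n_{s-1}$ involves only $n_i$ with index $i\le s-2$, so dividing by $n_1\ge 2$ recovers $n_{s-1}$. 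Iterating this for all $s$ recovers the full degree sequence, hence $M_1(T)$, as a function of $M_2(T)$, which yields the implication $M_2(T_1)=M_2(T_2)\Rightarrow M_1(T_1)=M_1(T_2)$.

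In summary, the approach is: extract $n$ and the pairwise-sum multiset $P$ from $M_2(T)$; observe that the naive ``reconstruct a multiset from its pairwise sums'' step genuinely fails for $n$ a power of $2$; and circumvent this by using that a tree has leaves, which anchors a routine induction recovering $n_1,n_2,n_3,\dots$ in turn. The only delicate point beyond invoking the leaves is the bookkeeping in the multiplicity formula — isolating the $n_1 n_{s-1}$ term and handling the even-$s$ diagonal term $\binom{n_{s/2}}{2}$ — together with dispatching the degenerate cases $n\le 2$ separately.
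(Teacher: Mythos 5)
Your proposal is correct and follows essentially the same route as the paper: recover the pairwise degree-sum multiset from $M_2(T)$ via $\deg(u)+\deg(v)=\Ext(X)+2\Int(X)$, get the leaf count from the multiplicity of the value $2$, and then inductively peel off each $n_{s-1}$ by isolating the $n_1 n_{s-1}$ term and dividing by $n_1$. Your added remarks (the general failure of pairwise-sum reconstruction when $n$ is a power of $2$, and the explicit use of $n_1\ge 2$ for trees) are a nice clarification of why the tree hypothesis matters, but the underlying argument coincides with the paper's.
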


\begin{proof}
The given multiset $M_2(T) = \{(\Int(X), \Ext(X))\}$ uniquely determines the multiset $D(T) = \{\deg(u) + \deg(v) \mid \{u,v\} \in \binom{V(T)}{2}\}$, since $\deg(u)+\deg(v) = \Ext(\{u,v\}) + 2\Int(\{u,v\})$. We show $D(T)$ determines the degree sequence $M_1(T) = d(T)$.

Let $\#(i,S)$ denote the number of occurrences of the element $i$ in the multiset $S$. The size $|D(T)| = \binom{n}{2}$ determines $n=|V(T)|$. Assume $n \ge 2$. The count $\#(2, D(T)) = \binom{\#(1,d(T))}{2}$, as any pair of vertices in $T$ whose degrees add to 2 must both be leaves. This allows us to determines the number of leaves $\#(1,d(T))$.

We proceed by strong induction on $k \ge 2$ to determine $\#(k,d(T))$, assuming $\#(i,d(T))$ are determined for all $1 \le i<k$. The count $\#(k+1, D(T))$ can be expressed as:
$$ \#(k+1, D(T)) = \sum_{\substack{i+j=k+1 \\ 1 \le i \le j}} \begin{cases} \#(i, d(T))\#(j, d(T)) & \text{if } i < j \\ \binom{\#(i, d(T))}{2} & \text{if } i = j \end{cases} $$
The term in this sum corresponding to $(i,j) = (1,k)$ is $\#(1, d(T))\#(k, d(T))$. All other terms correspond to pairs $(i,j)$ with $1 < i \le j$. For these terms, $i\le j \le k-1$. Thus, all terms in the sum, except for $\#(1, d(T))\#(k, d(T))$, depend only on counts $\#(i', d(T))$ where $i' < k$. By the induction hypothesis, these counts are known.
Therefore, $\#(1, d(T))\#(k, d(T))$ is known, and since $\#(1, d(T))$ is known, we can uniquely determine the value of $\#(k,d(T))$.

Thus, $D(T)$ determines $d(T) = M_1(T)$. Since $M_2(T)$ determines $D(T)$, the result follows.
\end{proof}

To prove that trees with identical 4-dimensional hypercube polynomials need not have identical 3-dimensional cube polynomials, we present the following pair of trees for which this is the case:
\begin{figure}[ht]
    \centering
    \begin{tikzpicture}
        \node[vertex] (S1) at (0,0) {};
        \node[vertex] (S2) at (1,0) {};
        \node[vertex] (S2U) at (1,1) {};
        \node[vertex] (S3) at (2,0) {};
        \node[vertex] (S4) at (3,0) {};
        \node[vertex] (S5) at (4,0) {};
        \node[vertex] (S6) at (5,0) {};
        \node[vertex] (S7) at (6,0) {};
        \node[vertex] (S8) at (7,0) {};
        \node[vertex] (S4U1) at (3,1) {};
        \node[vertex] (S4U2) at (4,1) {};
        \node[vertex] (S4U3) at (5,1) {};
        \draw[edge] (S1) -- (S2) -- (S3) -- (S4) -- (S5) -- (S6) -- (S7) -- (S8);
        \draw[edge] (S2U) -- (S2);
        \draw[edge] (S4) -- (S4U1) -- (S4U2) -- (S4U3);
        \node at (3.5, -0.5) {Tree $T_1$};

        \node[vertex] (T1) at (8,0) {};
        \node[vertex] (T2) at (9,0) {};
        \node[vertex] (T2U) at (9,1) {};
        \node[vertex] (T3) at (10,0) {};
        \node[vertex] (T3U) at (10,1) {};
        \node[vertex] (T4) at (11,0) {};
        \node[vertex] (T5) at (12,0) {};
        \node[vertex] (T6) at (13,0) {};
        \node[vertex] (T7) at (14,0) {};
        \node[vertex] (T8) at (15,0) {};
        \node[vertex] (T9) at (16,0) {};
        \node[vertex] (T10) at (16,1) {};
        \draw[edge] (T1) -- (T2) -- (T3) -- (T4) -- (T5) -- (T6) -- (T7) -- (T8) -- (T9) -- (T10);
        \draw[edge] (T2U) -- (T2);
        \draw[edge] (T3U) -- (T3);
        \node at (12, -0.5) {Tree $T_2$};
    \end{tikzpicture}
\end{figure}
\begin{align*}
\pi_{T_1}^{(Q_4)}(k) = \pi_{T_2}^{(Q_4)}(k) 
&= \frac{495}{16}k^{16} - \frac{5775}{8}k^{15} + \frac{62525}{8}k^{14} - 52077k^{13} \\
& + \frac{3820713}{16}k^{12} - \frac{6384613}{8}k^{11} + \frac{32128997}{16}k^{10} - \frac{15491779}{4}k^9 \\
& + \frac{577231}{1}k^8 - \frac{26590137}{4}k^7 + \frac{93863265}{16}k^6 - \frac{7791539}{2}k^5 \\
& + \frac{3768707}{2}k^4 - 626702k^3 + 128145k^2 - 12144k
\end{align*}
\begin{align*}
\pi_{T_1}^{(Q_3)}(k) &=\frac{55}{2}k^{15} - 550k^{14} + \frac{10147}{2}k^{13} - 28609k^{12} + \frac{220335}{2}k^{11} - \frac{2451663}{8}k^{10} \\
&+ 635055k^9 - \frac{7967177}{8}k^8 + \frac{4750757}{4}k^7 - \frac{8575621}{8}k^6 + 720801k^5 \\
& - \frac{2801415}{8}k^4 + \frac{464797}{4}k^3 - \frac{47153}{2}k^2 + 2207k
\end{align*}
\begin{align*}
\pi_{T_2}^{(Q_3)}(k)&= \frac{55}{2}k^{15} - 550k^{14} + \frac{10147}{2}k^{13} - 28609k^{12} + \frac{881345}{8}k^{11} - \frac{2451731}{8}k^{10} \\
& + \frac{5080851}{8}k^9 - \frac{7968631}{8}k^8 + \frac{9504849}{8}k^7 - \frac{8580805}{8}k^6 + \frac{5771941}{8}k^5 \\
& - \frac{2805421}{8}k^4 + \frac{465739}{4}k^3 - \frac{47283}{2}k^2 + 2215k
\end{align*}

We present the following pair of trees $T_1$ and $T_2$ for which $\pi^{(Q_3)}_{T_1}(k) = \pi^{(Q_3)}_{T_2}(k)$, but $M_3(T_1)\ne M_3(T_2)$
\begin{figure}[ht]
    \centering
    \begin{tikzpicture}
        \node[vertex] (S1) at (0,0) {};
        \node[vertex] (S2) at (1,0) {};
        \node[vertex] (S3) at (2,0) {};
        \node[vertex] (S4) at (3,0) {};
        \node[vertex] (S4L) at (2.5,1) {};
        \node[vertex] (S4D) at (3,-1) {};
        \node[vertex] (S4R) at (3.5,1) {};
        \node[vertex] (S5) at (4,0) {};
        \node[vertex] (S5U) at (4.5,1) {};
        \node[vertex] (S6) at (5,0) {};
        \node[vertex] (S6D1) at (5.5,-1) {};
        \node[vertex] (S6D2) at (6.5,-1) {};
        \node[vertex] (S7) at (6,0) {};
        \node[vertex] (S7U) at (6.5,1) {};
        \node[vertex] (S7D) at (5.5,1) {};
        \node[vertex] (S8) at (7,0) {};
        \draw[edge] (S1) -- (S2) -- (S3) -- (S4) -- (S5) -- (S6) -- (S7) -- (S8);
        \draw[edge] (S4L) -- (S4) -- (S4R);
        \draw[edge] (S4D) -- (S4);
        \draw[edge] (S5U) -- (S5);
        \draw[edge] (S6) -- (S6D1) -- (S6D2);
        \draw[edge] (S7D) -- (S7) -- (S7U);
        \node at (3.5, -1.5) {Tree $T_1$};

        \node[vertex] (T1) at (9,0) {};
        \node[vertex] (T2) at (10,0) {};
        \node[vertex] (T2U) at (9.5,1) {};
        \node[vertex] (T3) at (11,0) {};
        \node[vertex] (T3L) at (10.5,1) {};
        \node[vertex] (T3R) at (11.5,1) {};
        \node[vertex] (T4) at (12,0) {};
        \node[vertex] (T5) at (13,0) {};
        \node[vertex] (T5L) at (12.5,1) {};
        \node[vertex] (T5R) at (13.5,1) {};
        \node[vertex] (T5D) at (13,-1) {};
        \node[vertex] (T6) at (14,0) {};
        \node[vertex] (T6D1) at (14.5,-1) {};
        \node[vertex] (T6D2) at (15.5,-1) {};
        \node[vertex] (T7) at (15,0) {};
        \node[vertex] (T8) at (16,0) {};
        \draw[edge] (T1) -- (T2) -- (T3) -- (T4) -- (T5) -- (T6) -- (T7) -- (T8);
        \draw[edge] (T6) -- (T6D1) -- (T6D2);
        \draw[edge] (T2) -- (T2U);
        \draw[edge] (T3L) -- (T3) -- (T3R);
        \draw[edge] (T5L) -- (T5) -- (T5R);
        \draw[edge] (T5) -- (T5D);
        \node at (12.5, -1.5) {Tree $T_2$};
    \end{tikzpicture}
\end{figure}
\begin{align*}
\pi_{T_1}^{(Q_4)}(k) = \pi_{T_2}^{(Q_4)}(k) 
&= 70k^{19}-1680k^{18}+\frac{75999}{4}k^{17}-\frac{1076337}{8}k^{16}\\
&+668493k^{15}-\frac{9899537}{4}k^{14}+\frac{28300291}{4}k^{13}-\frac{63874601}{4}k^{12}\\
&+28845446k^{11}-42011872k^{10}+\frac{197796569}{4}k^9-\frac{375380081}{8}k^8+35626802k^7\\
&-\frac{85398283}{4}k^6+\frac{39499621}{4}k^5-\frac{6803041}{2}k^4+821653k^3-124194k^2+8840k
\end{align*}

However, in $T_1$, there are only 2 triplets of vertices with 2 internal edges and 6 external edges, and in $T_2$ there are 3 such triplets.

Next, we describe connections to the chromatic symmetric function (CSF) of a tree. We begin by recalling the definition of the CSF. If a graph $G$ has the vertex set $\{v_1, \ldots, v_n\}$, the CSF of $G$ is defined by a power series in infinitely many variables:
$$
X_G = X_G(x_1, x_2, ...) = \sum_{c} x_{c(v_1)} x_{c(v_2)} \cdots  x_{c(v_n)}
$$
where the sum ranges over all proper colorings $c\colon V(G)\to \mathbb{N}$. Stanley asked whether two trees sharing the same CSF must be isomorphic in \cite{Sta95}. Recently, \cite{APMWZ24} proved that if two trees share the same CSF, then they share the same generalized degree sequence, confirming Crew's conjecture~\cite{Cre20}. Since $\pi_{T}^{(Q_d)}$ is determined by the generalized degree sequence, we have the following consequence.

\begin{cor}\label{cor:CSF:hypercube}
If $T_1$ and $T_2$ are two trees with the same CSF, then $\pi_{T_1}^{(Q_d)} = \pi_{T_2}^{(Q_d)}$ for all $d\geq 0$. 
\end{cor}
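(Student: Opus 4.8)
The plan is to chain together two inputs. The first is internal to this paper: Theorem~\ref{thm:hypercube-trees} writes $\pi_T^{(Q_d)}(k)$ as a sum over all $d$-subsets $S\subseteq V(T)$ of an explicit monomial in $k$, $k-1$, $k-2$, $k-3$ whose exponents $n+d-1-\deg(S)$, $\deg(S)-\Int(S)$, and $\Int(S)$ are determined entirely by $n=|V(T)|$, $\deg(S)$, and $\Int(S)$. Hence $\pi_T^{(Q_d)}(k)$ is a function of $n$ together with the multiset $\{(\deg(S),\Int(S)) : S\subseteq V(T),\ |S|=d\}$, with the degenerate case $d=0$ giving $\pi_T(k)=k(k-1)^{n-1}$, which needs only $n$. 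The second input is the theorem of \cite{APMWZ24} resolving Crew's conjecture: two trees with the same chromatic symmetric function have the same generalized degree sequence.

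Concretely, I would first observe that the generalized degree sequence of $T$, the multiset $\{(|S|,\Int(S),\Ext(S)) : S\subseteq V(T)\}$, determines $n$ (for instance as the cardinality exponent $2^n$ of the multiset, or as the maximal value of $|S|$ occurring), and for each fixed $d$ it determines the sub-multiset $\{(\Int(S),\Ext(S)) : |S|=d\}$ by restriction. Since $\deg(S)=\Ext(S)+2\Int(S)$, that sub-multiset is equivalent data to $\{(\deg(S),\Int(S)) : |S|=d\}$. Combining this with Theorem~\ref{thm:hypercube-trees} shows that the generalized degree sequence of $T$ determines $\pi_T^{(Q_d)}(k)$ for every $d\geq 0$.

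The corollary then follows immediately: if $T_1$ and $T_2$ have the same CSF, then \cite{APMWZ24} gives them the same generalized degree sequence, and the previous paragraph forces $\pi_{T_1}^{(Q_d)}=\pi_{T_2}^{(Q_d)}$ for all $d$. There is no substantive obstacle; all the depth is carried by the cited theorem of \cite{APMWZ24} and by the explicit formula of Theorem~\ref{thm:hypercube-trees}. The only point requiring any care is the elementary bookkeeping that the generalized degree sequence repackages exactly the data $(n,\deg(S),\Int(S))$ consumed by the formula — in particular that at fixed $|S|$ no information is lost passing from triples $(|S|,\Int(S),\Ext(S))$ to pairs $(\deg(S),\Int(S))$.
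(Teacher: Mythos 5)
Your proposal is correct and follows the same route as the paper: combine the fact from \cite{APMWZ24} that equal CSF implies equal generalized degree sequence with the observation (via $\deg(S)=\Ext(S)+2\Int(S)$) that the generalized degree sequence supplies exactly the data $(n,\deg(S),\Int(S))$ appearing in Theorem~\ref{thm:hypercube-trees}. The bookkeeping you flag about recovering $n$ and the fixed-$|S|$ sub-multisets is the same (implicit) step the paper relies on, so there is nothing further to add.
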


To address Stanley's question, it would be helpful to extend Corollary~\ref{cor:CSF:hypercube} to conclude equality $\pi_{T_1}^{(H)} = \pi_{T_2}^{(H)}$ for other graphs $H$. 

In \cite{AKL25}, it is shown that $\mathcal{C}_3(T)$ contains a special subgraph which is not present in a $3$-coloring graph of any other tree. In particular, for a tree $T$ on $n$ vertices, there exists a graph $H(T)$ on $\binom{n}{2}+2$ vertices and $n^2-2n+2$ edges such that $\pi^{(H(T))}_T(3)>0$. However, for any other tree $G$ on at most $n$ vertices, $\pi^{(H(T))}_G(3)=0$. Thus, the infinite family of polynomials $\{\pi^{(H(T))}\}_{T}$ is a complete tree invariant. Recovering $\pi^{(H(T))}$ from $X_T$ would answer Stanley's question in the affirmative.

To differentiate all trees on $n$ vertices or fewer, the number of polynomials to check using this method grows exponentially in $n$. This is because there is a bijection $T \leftrightarrow H(T)$ between trees and their associated polynomials. By reducing the number of polynomials to check, it may become more tractable to extract them from the CSF.

\section{Counterexample to Conjecture 5.2 of Asgarli et al.}\label{sec:counterexample}
In this section we demonstrate the computational advantages of the shadow graph method of counting induced subgraphs over previous methods, especially for smaller graphs. Calculation of the chromatic polynomial of a graph $G$ can be done relatively quickly if the chromatic polynomials of many subgraphs of $G$ are known. The number of chromatic polynomials necessary to construct $\pi^{(P_2)}_G$ grows linearly with the order of $G$, making calculation of $pi^{(P_2)}_G$ using shadow graphs relatively inexpensive compared to previous methods. This computational advantage has allowed us to perform a deeper direct search of chromatic pairs polynomials and find a counterexample to a conjecture posed by Asgarli et al.

\subsection{Statement of the Conjecture}

Asgarli et al. state the following conjecture in their work:

\begin{quote}
\textbf{Conjecture 5.2 (Asgarli et al.\ \cite{AKLR25})}: If two graphs $G_1$ and $G_2$ satisfy $\pi_{G_1}^{(P_2)}(k) = \pi_{G_2}^{(P_2)}(k)$, then $\pi_{G_1}(k) = \pi_{G_2}(k)$.
\end{quote}

This conjecture suggests that if two graphs have identical chromatic pairs polynomials, they must also have identical chromatic polynomials. Indeed no counterexample was found on graphs $G$ of order at most 7. We present a counterexample to this conjecture using two graphs of order 8 with equal $\pi^{(P_2)}(k)$ but differing $\pi(k)$.

\subsection{The Counterexample Graphs}

We present the two graphs $ G_1 $ and $ G_2 $ that form our counterexample.

\begin{figure}[ht]
    \centering
    \begin{tikzpicture}
        \node[vertex] (A1) at (0,0) {};
        \node[vertex] (B1) at (1,0) {};
        \node[vertex] (C1) at (1,1) {};
        \node[vertex] (D1) at (0,1) {};
        \draw[edge] (A1) -- (B1) -- (C1) -- (D1) -- (A1) -- (C1);
        \node[vertex] (A2) at (2,0) {};
        \node[vertex] (B2) at (3,0) {};
        \node[vertex] (C2) at (3,1) {};
        \node[vertex] (D2) at (2,1) {};
        \draw[edge] (A2) -- (B2) -- (C2) -- (D2) -- (A2) -- (C2);
        \node[vertex] (A) at (6,0) {};
        \node[vertex] (B) at (7,0) {};
        \node[vertex] (C) at (7,1) {};
        \node[vertex] (D) at (6,1) {};
        \node[vertex] (E) at (8,0.5) {};
        \node[vertex] (F) at (5, 0.5) {};
        \node[vertex] (G) at (8,0) {};
        \node[vertex] (H) at (8,1) {};
        \draw[edge] (A) -- (B) -- (C) -- (D) -- (A) -- (C);
        \draw[edge] (B) -- (D);
        \draw[edge] (B) -- (E);
        \draw[edge] (C) -- (E);
        \draw[edge] (G) -- (B);
        \draw[edge] (H) -- (C);
        \node at (1.5, -0.5) {Graph $G_1$};
        \node at (6.5, -0.5) {Graph $G_2$};
        
    \end{tikzpicture}

    \caption{Graphs $ G_1 $ and $ G_2 $ used as a counterexample to Conjecture 5.2.}
    \label{fig:counterexample-graphs}
\end{figure}
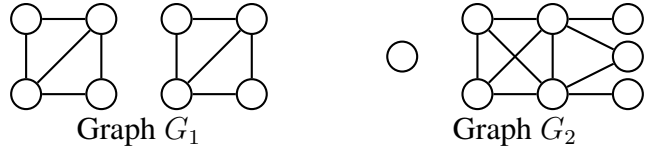
The chromatic pairs polynomials of both graphs are as follows:
$$
\pi_{G_1}^{(P_2)}(k) = \pi_{G_2}^{(P_2)}(k) = 4k^9 - 54k^8 + 306k^7 - 942k^6 + 1698k^5 - 1788k^4 + 1016k^3 - 240k^2.
$$
The chromatic polynomials of the graphs are:
$$
\pi_{G_1}(k) = k^8 - 10k^7 + 41k^6 - 88k^5 + 104k^4 - 64k^3 + 16k^2.
$$
$$
\pi_{G_2}(k) = k^8 - 10k^7 + 40k^6 - 82k^5 + 91k^4 - 52k^3 + 12k^2.
$$
As seen, $\pi_{G_1}(k) \neq \pi_{G_2}(k)$.

Another counterexample involving connected graphs is the following:
\begin{figure}[ht]
    \centering
    \begin{tikzpicture}
        \node[vertex] (S1) at (0, 0.59) {}; 
        \node[vertex] (S2) at (0, -0.59) {};
        \node[vertex] (A1) at (-1.12, -0.95) {};
        \node[vertex] (A2) at (-1.81, 0.00) {};
        \node[vertex] (A3) at (-1.12, 0.95) {};
        \node[vertex] (B1) at (1.12, -0.95) {};
        \node[vertex] (B2) at (1.81, 0.00) {};
        \node[vertex] (B3) at (1.12, 0.95) {};
        \draw[edge] (S1) -- (S2);
        \draw[edge] (S1) -- (A1);
        \draw[edge] (S1) -- (A2);
        \draw[edge] (S1) -- (A3);
        \draw[edge] (S2) -- (A1);
        \draw[edge] (S2) -- (A2);
        \draw[edge] (S2) -- (A3);
        \draw[edge] (A1) -- (A3);
        \draw[edge] (A2) -- (A1);
        \draw[edge] (S1) -- (B2);
        \draw[edge] (S1) -- (B1);
        \draw[edge] (S2) -- (B1);
        \draw[edge] (S2) -- (B2);
        \draw[edge] (S2) -- (B3);
        \draw[edge] (B1) -- (B2);
        \draw[edge] (B1) -- (B3);
        \draw[edge] (B2) -- (B3);
        \node[vertex] (L) at (3.5, 0) {};
        \node[vertex] (Z1) at (4.5, 0.59) {}; 
        \node[vertex] (Z2) at (4.5, -0.59) {};
        \node[vertex] (C1) at (5.62, -0.95) {};
        \node[vertex] (C2) at (6.31, 0.00) {};
        \node[vertex] (C3) at (5.62, 0.95) {};
        \node[vertex] (R1) at (7.43,0.5) {};
        \node[vertex] (R2) at (7.43,-0.5) {};
        \draw[edge] (Z1) -- (L) -- (Z2) -- (Z1) -- (C3) -- (C2) -- (C1) -- (Z2) -- (C3) -- (C1) -- (Z1) -- (C2) -- (Z2);
        \draw[edge] (C1) -- (R1) -- (C2);
        \draw[edge] (Z2) -- (R2) -- (R1) -- (C3);
        \node at (0, -1.5) {Graph $G_1$};
        \node at (5.7, -1.5) {Graph $G_2$};
    \end{tikzpicture}
\end{figure}
$$
\pi_{G_1}^{(P_2)}(k) = \pi_{G_2}^{(P_2)}(k) = 8k^9-178k^8+\frac{1711}{2}k^7-\frac{9265}{2}k^6+\frac{30841}{2}k^5-\frac{64417}{2}k^4+41032k^3-28950k^2+8568k.
$$
$$
\pi_{G_1}(k) =  k^8-17k^7+122k^6-479k^5+1109k^4-1508k^3+1108k^2-336k.
$$
$$
\pi_{G_2}(k) = k^8-17k^7+122k^6-478k^5+1101k^4-1485k^3+1080k^2-324k.
$$
\section{Conclusion}

In this paper, we introduced a novel method based on \emph{shadow graphs} to explicitly compute the polynomials $\pi_G^{(H)}(k)$, counting induced copies of a graph $H$ within the $k$-coloring graph $\mathcal{C}_k(G)$ of a base graph $G$. This method provides a constructive proof for the polynomial nature of $\pi_G^{(H)}(k)$, complementing existing proofs by Asgarli et al. \cite{AKLR25} and Hogan et al. \cite{HSTT24}. We demonstrated the practicality of this construction conceptually by determining the strength of all hypercube counting polynomials as tree invariants, as well as computationally by implementing the chromatic pairs polynomial to find a counterexample to a conjecture by Asgarli et al. \cite{AKLR25}. Looking ahead, there are several avenues for future research. The connection between $\pi_T^{(Q_d)}(k)$ and the generalized degree sequence, combined with the recent result that the CSF determines the generalized degree sequence, naturally leads to Corollary~\ref{cor:CSF:hypercube}. Determining which other $\pi_T^{(H)}(k)$ polynomials can be readily determined from the CSF would likely further illuminate the potential of the CSF as a complete tree invariant. 

Overall, shadow graphs provide a powerful and explicit tool for exploring the structure of coloring graphs and their connection to fundamental graph invariants. The results presented here open up new computational possibilities and theoretical questions about graph colorings, reconfiguration problems, and graph invariants.
\section{Statements and Declarations}
\textbf{Conflict of Interest Statement.} The author declares that there is no conflict of interest arising from this work.

\textbf{Data Availability Statement.} No data sets were used in the current manuscript.
\bibliographystyle{alpha}
\bibliography{main}

\end{document}